\titleformat{\subsection}[runin]
{\bfseries} {\thesubsection{.}}{0.15cm}{}[.]
\titleformat{\subsubsection}[runin]
{\em}{\thesubsubsection{.}}{0.15cm}{}[.]
\newtheorem{theorem}{Theorem}[section]
\newtheorem{proposition}[theorem]{Proposition}
\newtheorem{claim}[theorem]{Claim}
\newtheorem{lemma}[theorem]{Lemma}
\newtheorem{corollary}[theorem]{Corollary}
\theoremstyle{definition}
\newtheorem{definition}[theorem]{Definition}
\newtheorem{remark}[theorem]{Remark}
\numberwithin{equation}{section}
\numberwithin{figure}{section}
\newcommand\Hcal{\mathcal{H}}
\newcommand\Mcal{\mathcal{M}}
\newcommand\Ncal{\mathcal{N}}
\newcommand\Pcal{\mathcal{P}}
\newcommand\Ascr{\mathscr{A}}
\newcommand\Cscr{\mathscr{C}}
\newcommand\Oscr{\mathscr{O}}
\def\c{\mathbb{C}}
\newcommand\cp{\mathbb{CP}}
\def\n{\mathbb{N}}
\renewcommand\r{\mathbb{R}}
\newcommand\s{\mathbb{S}}
\newcommand\z{\mathbb{Z}}
\renewcommand\k{\mathbb{K}}
\newcommand\igot{\mathfrak{i}}
\renewcommand\igot{\mathfrak{i}}
\newcommand\pgot{\mathfrak{p}}
\newcommand\Agot{\mathfrak{A}}
\newcommand\Hgot{\mathfrak{H}}
\renewcommand\imath{\igot}
\newcommand\wt{\widetilde}
\newcommand\wh{\widehat}
\newcommand\di{\partial}
\newcommand\dist{\mathrm{dist}}
\newcommand\length{\mathrm{length}}
\newcommand\Flux{\mathrm{Flux}}
\begin{document}

\fancyhead[LO]{Interpolation by complete minimal surfaces whose Gauss map misses two points} 
\fancyhead[RE]{I.\ Castro-Infantes}
\fancyhead[RO,LE]{\thepage}

\thispagestyle{empty}

\vspace*{6mm}
\begin{center}
{\bf \LARGE Interpolation by complete minimal surfaces whose Gauss map misses two points}

\vspace*{5mm}

{\large\bf Ildefonso Castro-Infantes}
\end{center}

\vspace*{7mm}

\begin{quote}
{\small
\noindent {\bf Abstract} \hspace*{0.1cm}
Let $M$ be an open Riemann surface and let $\Lambda\subset M$ be a closed discrete subset.
In this paper, we prove the existence of complete conformal minimal immersions $M\to\mathbb{R}^n$, $n\ge 3$, with prescribed values on $\Lambda$ and whose generalized Gauss map $M\to\mathbb{CP}^{n-1}$, $n\ge 3$, avoids $n$ hyperplanes of $\mathbb{CP}^{n-1}$ located in general position. In case $n=3$, we obtain complete nonflat conformal minimal immersions whose Gauss map $M\to\mathbb{S}^2$ omits two (antipodal) values of the sphere.

This result is deduced as a consequence of an interpolation theorem for conformal minimal immersions $M\to\mathbb{R}^n$ into the Euclidean space $\mathbb{R}^n$, $n\ge 3$, with $n-2$ prescribed components. 

\medskip

\noindent{\bf Keywords} \hspace*{0.1cm} 
minimal surface, Riemann surface, interpolation theory, harmonic map, Gauss map.

\medskip

\noindent{\bf MSC (2010)} \hspace*{0.1cm} 
53A10, 
53C42, 
32E30, 
30F15.  	
}
\end{quote}


\section{Introduction}\label{sec:intro}

%
Conformal minimal immersions of an open Riemann surface with values in a Euclidean space $\r^n$, $n\ge 3$, are harmonic maps. This fact has greatly influenced the study of minimal surfaces providing powerful techniques coming from complex analysis. One of the most interesting result is Mergelyan theorem  \cite{Runge1885AM,Mergelyan1951DAN,Bishop1958PJM}, which has allowed, among other achievements, the development of a theory of uniform approximation on compact subset \cite{AlarconLopez2012JDG,AlarconForstnericLopez2016MZ}, tangential approximation on some unbounded subsets \cite{CastroBrett2019}, and a theory of interpolation on discrete subsets \cite{AlarconCastro2017,AlarconCastroLopez2017,AlarconLopez2019Mittag} for conformal minimal immersions into the Euclidean spaces.

Let $M$ be an open Riemann surface and let $\partial$ denote the complex linear part of the exterior differential, $d=\di+\overline\di$ on $M$ where $\overline\di$ denotes the antilinear part. Given a conformal minimal immersion $X\colon M\to\r^n$, $\di X$ determines the Kodaira type holomorphic map
\begin{equation}\label{eq:ggm}
G_X\colon M\to\cp^{n-1}, M\ni p\mapsto G_X(p)=[\partial X(p)]
\end{equation}
which assumes values in the complex hyperquadric
\[
Q_{n-2}:=\{[z_1,\ldots,z_n]\in \cp^{n-1}: z_1^2+\ldots +z_n^2=0 \}\subset \cp^{n-1}
\]
and is known as the {\em generalized Gauss map of $X$}. Conversely, every holomorphic map $M\to Q_{n-2}$ is the generalized Gauss map of a conformal minimal immersion $M\to\r^n$, see \cite{AlarconForstnericLopez2016Gauss}. The study of the (generalized) Gauss map of a conformal minimal immersion is an important topic in the theory of minimal surfaces. Chern and Osserman \cite{Chern1965-book,ChernOsserman1967JAM} proved that if $X$ is complete then $X(M)$ is a plane or $G_X(M)$ intersects a dense subset of complex hyperplanes. Moreover, Ru \cite{Ru1991JDG} showed that the Gauss map of a complete nonflat conformal minimal immersion omits at most $n(n+1)/2$ hyperplanes in $\cp^{n-1}$ located in general position, generalizing a result of Fujimoto \cite{Fujimoto1990JDG}. In addition, this upper bound is sharp for some values of $n\ge 3$, see Fujimoto \cite{Fujimoto1988SRKU}.
Ahlfors \cite{Ahlfors1941ASSF} proved that if $G\colon \c\to\cp^{n-1}$ is a holomorphic map avoiding $n+1$ hyperplanes in general position, then $G$ is a {\em degenerate} map, that is, its image lies in a hyperplane of $\cp^{n-1}$. Concerning this, Alarc\'on, Fern\'andez, and L\'opez \cite{AlarconFernandezLopez2013CVPDE} proved that for any open Riemann surface $M$, there exists a complete conformal minimal immersion $X\colon M\to\r^n$ whose generalized Gauss map $G_X$ is nondegenerate and fails to intersect $n$ hyperplanes in general position; the number $n$ here is the maximum possible by Ahlfors theorem.
We show in this paper that one can prescribe the values of such an immersion on a closed discrete subset of $M$. 
\begin{theorem}\label{th:GXsimple}
	Let $M$ be an open Riemann surface and $\Lambda\subset M$ be a closed discrete subset. Any map $\Lambda\to\r^n$, $n\ge 3$,  extends to a complete conformal minimal immersion $X\colon M\to\r^n$ whose generalized Gauss map $G_X\colon M\to\cp^{n-1}$ is nondegenerate and fails to intersect $n$ hyperplanes of $\cp^{n-1}$ located in general position.
\end{theorem}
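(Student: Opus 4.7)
The plan is to deduce Theorem~\ref{th:GXsimple} from the interpolation theorem with $n-2$ prescribed components announced in the abstract. After a linear change of coordinates in $\r^n$ (which induces a projective automorphism of $\cp^{n-1}$ under which the generalized Gauss map $G_X$ transforms equivariantly), the $n$ target hyperplanes in general position may be normalized to be the coordinate hyperplanes $\{z_j=0\}$, $j=1,\ldots,n$. Under this normalization, $G_X=[\partial X_1:\cdots:\partial X_n]$ misses all $n$ of them precisely when every $\partial X_j$ is a zero-free holomorphic $1$-form on $M$, and nondegeneracy becomes $\c$-linear independence of $\partial X_1,\ldots,\partial X_n$.

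First I would construct $n-2$ harmonic functions $u_1,\ldots,u_{n-2}\colon M\to\r$ realizing the prescribed first $n-2$ coordinates of the given map on $\Lambda$ and such that (a) each $\partial u_j$ is zero-free on $M$, and (b) the quadratic differential $\omega:=-\sum_{j=1}^{n-2}(\partial u_j)^2$ is zero-free on $M$. Fix a nowhere-vanishing holomorphic $1$-form $\theta$ on $M$ (Gunning--Narasimhan); writing $\partial u_j=f_j\theta$ and solving on a Runge exhaustion by the standard interpolation theory for holomorphic functions with prescribed periods provides such $u_j$, since (a) and (b) are open conditions on the $f_j$ that can be enforced by generic perturbation. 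The purpose of (b) is to enable a factorization $\omega=fg\,\theta^2$ into zero-free holomorphic $f,g$, giving zero-free candidates $\partial X_{n-1}=\tfrac12(f+g)\theta$ and $\partial X_n=\tfrac{1}{2i}(f-g)\theta$ which automatically satisfy the conformality relation $\sum_{j=1}^n(\partial X_j)^2=0$.

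Second I would invoke the interpolation theorem with $n-2$ prescribed components to extend $(u_1,\ldots,u_{n-2})$ to a complete conformal minimal immersion $X\colon M\to\r^n$ that matches the prescribed values on $\Lambda$ and keeps all $\partial X_j$ zero-free. The theorem itself is proved by a Runge--Mergelyan recursion on a normal exhaustion of $M$ in the spirit of the cited works of Alarc\'on, Castro-Infantes, and L\'opez, at every stage performing a small holomorphic perturbation of the Weierstrass data of $(\partial X_{n-1},\partial X_n)$ that simultaneously (i) interpolates the prescribed values on the finitely many new points of $\Lambda$, (ii) kills the real periods of $\partial X_{n-1}$ and $\partial X_n$ so that $X_{n-1},X_n$ are single-valued, (iii) preserves zero-freeness of all $\partial X_j$, and (iv) adds enough intrinsic length to a prescribed family of divergent curves to force completeness of the limit via a Jorge--Xavier-type loop. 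Nondegeneracy of $G_X$ is an open condition and is arranged by a generic small perturbation compatible with (i)--(iv).

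The main obstacle is managing (i)--(iv) simultaneously. Period-killing and interpolation corrections live in finite-dimensional quotients of the Weierstrass data, but must be realized by perturbations supported away from $\Lambda$ and confined to the open locus where every $\partial X_j$ remains zero-free; prescribing only $n-2$ components is precisely what leaves two free degrees of freedom to absorb period and interpolation corrections without leaving this open locus. The technical heart of the argument will therefore be a period-dominating lemma for the Weierstrass pair $(f,g)$ that is stable under restriction to the zero-free locus and allows Mittag-Leffler-type sparse corrections around each new node of $\Lambda$.
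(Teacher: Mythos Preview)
Your route is genuinely different from the paper's, and it has a real gap.

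\textbf{On the normalization.} A linear change of coordinates in $\r^n$ that keeps $X$ conformal and minimal must be a scaled orthogonal map; such maps do not act transitively on $n$-tuples of hyperplanes in general position in $\cp^{n-1}$, so your reduction to the coordinate hyperplanes $\{z_j=0\}$ is not valid as stated. This is not fatal, since the theorem only asks for \emph{some} $n$ hyperplanes in general position, and you are free to simply aim for the coordinate ones from the outset; but then the burden is on you to show that this particular target is achievable.

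\textbf{The main gap.} Having $\omega=-\sum_{j=1}^{n-2}(\partial u_j)^2$ zero-free lets you factor $\omega=fg\,\theta^2$ with $f,g$ zero-free, but this does \emph{not} make $\partial X_{n-1}=\tfrac12(f+g)\theta$ and $\partial X_n=\tfrac{1}{2i}(f-g)\theta$ zero-free: you also need $f\neq\pm g$ everywhere on $M$. Your recursion would have to carry these two extra global nonvanishing conditions through every step (on top of the zero-freeness of each $\partial u_j$ and of $\omega$), and you do not address this. The paper's Theorem~\ref{th:fixsimple}/\ref{th:fixingCMI} gives no control on the zeros of the two constructed components, so you cannot simply invoke it.

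\textbf{What the paper does instead.} The paper does \emph{not} target the coordinate hyperplanes and does \emph{not} funnel Theorem~\ref{th:GXsimple} through the ``$n-2$ prescribed components'' theorem. It groups the coordinates into pairs $(X_{2j-1},X_{2j})$ and, for each pair, applies the basic two-component engine (Proposition~\ref{pro:moving2}) with $H=\zeta_j^2$ a nonzero \emph{constant} (with $\sum_j\zeta_j^2=0$ for $n$ even, $=-1$ with an extra $n$-th component equal to $1$ for $n$ odd). Since $f_{j,1}^2+f_{j,2}^2=\zeta_j^2\neq 0$, the functions $f_{j,1}\pm\imath f_{j,2}$ never vanish, so the Gauss map automatically misses the $n$ hyperplanes
\[
\{z_{2j-1}+(-1)^\delta\imath\, z_{2j}=0\},\quad j=1,\ldots,\lfloor n/2\rfloor,\ \delta\in\{0,1\},
\]
together with $\{z_n=0\}$ when $n$ is odd. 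These are in general position, and their avoidance is built into the construction with no extra nonvanishing bookkeeping. Interpolation, flux, nondegeneracy and completeness all come from Proposition~\ref{pro:moving2} applied pairwise. The moral: choosing the hyperplanes to match the natural $(\eta,H/\eta)$ factorization of the Weierstrass pair makes the avoidance free, whereas insisting on coordinate hyperplanes forces you to control zeros of $f\pm g$, which is exactly the difficulty you have not handled.
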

\noindent Note that the assumptions on $\Lambda$ are necessary since, by the Identity Principle, it is not possible to prescribe the values of a conformal minimal immersion $M\to\r^n$ on a subset that has an accumulation point.
In case $n=3$ we obtain the following.
\begin{corollary}\label{cor:gaussmap3}
	Let $M$ be an open Riemann surface and $\Lambda\subset M$ be a closed discrete subset.
	Any map $\Lambda\to \r^3$ extends to a complete nonflat conformal minimal immersion $X\colon M\to\r^3$ whose Gauss map $M\to\s^2$ omits two (antipodal) values  of the sphere $\s^2$.
\end{corollary}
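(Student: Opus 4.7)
The plan is to deduce Corollary \ref{cor:gaussmap3} as a direct specialization of Theorem \ref{th:GXsimple} to $n=3$, after translating the geometric condition ``the spherical Gauss map $N\colon M\to\s^2$ omits a pair of antipodal values'' into the projective condition ``the generalized Gauss map $G_X\colon M\to\cp^2$ omits a hyperplane,'' via the classical Weierstrass dictionary.

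Concretely, $G_X(M)\subset Q_1=\{z_1^2+z_2^2+z_3^2=0\}\subset\cp^2$, and the parametrization $\cp^1\to Q_1$, $[u:w]\mapsto[(u^2-w^2)/2:i(u^2+w^2)/2:uw]$, is a biholomorphism. Composing it with $G_X$ recovers the Weierstrass ratio $g=\di X_3/(\di X_1-i\di X_2)$, i.e.\ the stereographic projection of $N$ from the north pole. Under this identification, the antipodal pair $(0,0,\pm 1)\in\s^2$ corresponds to $\{g=0,\,g=\infty\}=\{[1:i:0],\,[1:-i:0]\}\subset Q_1$, and these two points are exactly $H_1\cap Q_1$ for the hyperplane $H_1:=\{z_3=0\}\subset\cp^2$. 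Hence $N$ omits the antipodal pair $(0,0,\pm 1)$ if and only if $G_X$ avoids $H_1$.

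With the dictionary in hand, the rest is a one-shot application of the theorem. Let $f\colon\Lambda\to\r^3$ be the prescribed map. Choose two further hyperplanes $H_2,H_3\subset\cp^2$ so that $H_1,H_2,H_3$ are in general position (equivalently, no common point; a generic choice works). Apply Theorem \ref{th:GXsimple} with $n=3$, data $(\Lambda,f)$, and hyperplanes $H_1,H_2,H_3$; this yields a complete conformal minimal immersion $X\colon M\to\r^3$ extending $f$ whose nondegenerate generalized Gauss map $G_X$ avoids $H_1\cup H_2\cup H_3$. In particular $G_X$ avoids $H_1$, so $N$ misses $(0,0,\pm 1)$. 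Finally, nondegeneracy means $G_X(M)$ is not contained in any line of $\cp^2$; since any two points of $Q_1$ lie on a common line, $G_X$ attains at least three distinct values on $Q_1$, so $N$ is non-constant and $X$ is nonflat.

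No substantive obstacle is expected: the corollary reduces entirely to the theorem once the hyperplane $H_1$ cutting $Q_1$ at the target antipodal pair is singled out, and completed to a general-position triple. The only genuinely geometric ingredient is the identification $\cp^1\simeq Q_1$ matching pairs of antipodal points of $\s^2$ with pairs $H\cap Q_1$ for a suitable hyperplane $H\subset\cp^2$.
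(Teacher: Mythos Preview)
Your dictionary between ``the spherical Gauss map omits an antipodal pair'' and ``$G_X$ avoids the hyperplane $\{z_3=0\}$'' is correct, as is the nonflatness argument. The gap is in how you invoke Theorem~\ref{th:GXsimple}. As stated, that theorem only asserts the \emph{existence} of $n$ hyperplanes in general position missed by $G_X$; the hyperplanes are part of the conclusion, not input data you may prescribe. So you cannot simply declare $H_1=\{z_3=0\}$ and then feed it to the theorem. And this matters: a generic hyperplane $H\subset\cp^2$ meets $Q_1$ in two points which need \emph{not} be antipodal under $Q_1\cong\s^2$. For instance $\{z_1+\imath z_2+z_3=0\}$ meets $Q_1$ at $[1:\imath:0]$ and $[0:\imath:1]$, corresponding to $g=0$ and $g=1$, i.e.\ to the south pole $(0,0,-1)$ and the equatorial point $(1,0,0)$. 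Thus the bare existential form of Theorem~\ref{th:GXsimple} does not by itself produce an omitted antipodal pair.

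The paper takes a different route, deducing the corollary from Theorem~\ref{th:fixsimple} rather than Theorem~\ref{th:GXsimple}. Given $F=(F_1,F_2,F_3)\colon\Lambda\to\r^3$, it first invokes Forstneri\v{c}'s noncritical-function theorem to obtain a harmonic $h\colon M\to\r$ with $dh\neq 0$ everywhere and $h|_\Lambda=F_3$; then Theorem~\ref{th:fixsimple} with $\Hgot=h$ yields a complete conformal minimal immersion $X$ with $X_3=h$ and $(X_1,X_2)|_\Lambda=(F_1,F_2)$. Since $\partial X_3=\partial h$ is nowhere zero, $G_X$ avoids $\{z_3=0\}$ and the spherical Gauss map omits the poles; nonflatness is arranged by adding four auxiliary interpolation points not lying in a plane. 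Your strategy can be repaired by appealing instead to the sharper Theorem~\ref{th:gasusmap}, whose proof for odd $n$ explicitly exhibits $\{z_n=0\}$ among the avoided hyperplanes, but the black-box use of Theorem~\ref{th:GXsimple} is not enough.
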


Theorem \ref{th:GXsimple} and Corollary \ref{cor:gaussmap3} are deduced from an extension result for complete minimal surfaces with prescribed coordinates, see Theorem \ref{th:fixsimple}.
Alarc\'on, Fern\'andez, and L\'opez showed in \cite{AlarconFernandez2011DGA,AlarconFernandezLopez2012CMH,AlarconFernandezLopez2013CVPDE} that one may prescribe all but two of the component functions of a complete conformal minimal immersion $M\to\r^n$. 
On the other hand, Alarc\'on and Castro-Infantes proved in \cite{AlarconCastro2017} the existence of a complete conformal minimal immersion that interpolates any given map at a closed discrete subset of $M$.
In this paper we put together ideas from these two different subjects and obtain the following result.
\begin{theorem}\label{th:fixsimple}
	Let $M$ be an open Riemann surface and $n\ge 3$ be an integer.
	Let $\Lambda\subset M$ be a closed discrete subset and let $\Hgot\colon M\to\r^{n-2}$ be a nonconstant harmonic map.
	For any map $F\colon\Lambda\to\r^2$, there is a complete conformal minimal immersion $X=(X_1,X_2,\ldots,X_n)\colon M\to\r^n$ such that $(X_1,X_2)|_\Lambda=F$ and $(X_3,\ldots,X_n)=\Hgot$.
\end{theorem}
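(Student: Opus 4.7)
The plan is to recast the problem via the Weierstrass representation. For $j=3,\dots,n$, set $\phi_j:=2\di\Hgot_{j-2}$; these are prescribed holomorphic $1$-forms on $M$ whose real integrals already recover the components of $\Hgot$. Finding $X$ then amounts to producing two further holomorphic $1$-forms $\phi_1,\phi_2$ on $M$ satisfying the null identity $\phi_1^2+\phi_2^2=\Psi$, where $\Psi:=-\sum_{j=3}^n\phi_j^2$ is a fixed holomorphic quadratic differential (not identically zero because $\Hgot$ is nonconstant, at least when $n=3$; otherwise the construction still goes through with obvious modifications), together with the following: $\Re\oint\phi_k=0$ along every loop in $M$ for $k=1,2$; the interpolation equality $\Re\int_{p_0}^{p}(\phi_1,\phi_2)=F(p)-(X_1(p_0),X_2(p_0))$ at every $p\in\Lambda$; the nondegeneracy condition $\sum_{j=1}^n|\phi_j|^2>0$ on $M$; and completeness of the induced metric. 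The base point $p_0\in M\setminus\Lambda$ and the values $X_1(p_0),X_2(p_0)$ are freely chosen.

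Following \cite{AlarconFernandezLopez2013CVPDE}, the quadratic constraint is resolved by a Weierstrass-type substitution: choose a holomorphic $1$-form $\eta$ on $M$ with $\eta^2=\Psi$ (absorbing odd-order zeros of $\Psi$ into a meromorphic square factor) and set
\[
\phi_1=\tfrac12\bigl(\tfrac{1}{\mathfrak{g}}-\mathfrak{g}\bigr)\eta,\qquad \phi_2=\tfrac{i}{2}\bigl(\tfrac{1}{\mathfrak{g}}+\mathfrak{g}\bigr)\eta
\]
for a meromorphic function $\mathfrak{g}$ on $M$ whose zero/pole divisor is matched with that of $\eta$ so that $\phi_1,\phi_2$ are holomorphic and $\sum_j|\phi_j|^2$ has no common zero. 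The problem then reduces to constructing a single meromorphic $\mathfrak{g}$ on $M$ meeting the period, interpolation, immersion, and completeness conditions, all now expressed in terms of $\mathfrak{g}$.

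The construction proceeds by induction on a normal exhaustion $M_1\Subset M_2\Subset\cdots\Subset M$ of $M$ by smoothly bounded compact Runge domains, enumerating $\Lambda=\{p_1,p_2,\dots\}$ so that $\Lambda\cap M_k=\{p_1,\dots,p_{n_k}\}$. One builds meromorphic functions $\mathfrak{g}_k$ defined near $M_k$, each satisfying the period and interpolation conditions up to step $k$, and passes from $\mathfrak{g}_k$ to $\mathfrak{g}_{k+1}$ by three successive perturbations on $M_{k+1}$: (A) a Runge--Mergelyan approximation with prescribed jets at $p_1,\ldots,p_{n_k}$, preserving the interpolation achieved so far; (B) a finite-dimensional period-fixing spray annihilating the real periods of $(\phi_1,\phi_2)$ along the new generators of $H_1(M_{k+1};\Z)$, as developed in \cite{AlarconFernandezLopez2013CVPDE}; and (C) a localized perturbation along disjoint arcs joining $M_k$ to each new point $p_j\in(M_{k+1}\setminus M_k)\cap\Lambda$ that realizes the prescribed values $\Re\int_{p_0}^{p_j}(\phi_1,\phi_2)=F(p_j)$, in the spirit of the interpolation lemmas of \cite{AlarconCastro2017,AlarconCastroLopez2017}. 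Completeness is arranged by requiring that (C) also make any arc crossing the annulus $M_{k+1}\setminus M_k$ have $\phi$-length at least $1$, the standard mechanism for forcing divergent paths to have infinite length in the limit.

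The main obstacle is the rigidity imposed by the fixed quadratic constraint $\phi_1^2+\phi_2^2=\Psi$. Unlike the unconstrained case where all of $\phi_1,\dots,\phi_n$ can be freely deformed inside the Oka null quadric $\Acal_*=\{z\in\C^n\setminus\{0\}:\sum z_j^2=0\}$, the effective free data here is a single meromorphic function $\mathfrak{g}$ whose divisor is pinned down by $\eta$. Every perturbation must respect this divisor, vanish to high order at the already interpolated points, avoid introducing common zeros of $(\phi_1,\ldots,\phi_n)$, and still span enough directions to kill simultaneously the $2\,\mathrm{rank}\,H_1(M_{k+1};\Z)$ new real periods and to realize the two-dimensional interpolation at each new point of $\Lambda$. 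Verifying that the corresponding period and integration functionals are submersions on the admissible space of $\mathfrak{g}$'s, by combining the period-dominating spray technology of \cite{AlarconFernandezLopez2013CVPDE} with the jet-preserving interpolation devices of \cite{AlarconCastro2017}, is the core technical point that drives the induction and, after passing to the limit, yields the desired $X$.
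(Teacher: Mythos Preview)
Your overall strategy---recasting via Weierstrass data, running an exhaustion, and at each step applying Mergelyan-with-jets, a period-dominating spray, arc-based interpolation, and a Jorge--Xavier labyrinth for completeness---is exactly the paper's. The paper packages the inductive step into Proposition~\ref{pro:moving2} (built on Lemmas~\ref{lem:fixharmonic} and~\ref{lem:completeness}) and deduces Theorem~\ref{th:fixsimple} from the more general Theorem~\ref{th:fixingCMI}; your steps (A)--(C) mirror this, and you correctly identify the submersivity of the combined period/interpolation functional as the core technical point.

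The one genuine weak spot is your parametrization. Writing $\phi_1=\tfrac12(\tfrac1{\mathfrak g}-\mathfrak g)\eta$ and $\phi_2=\tfrac{\imath}2(\tfrac1{\mathfrak g}+\mathfrak g)\eta$ forces $\phi_1^2+\phi_2^2=-\eta^2$, so you need a global $1$-form $\eta$ on $M$ with $\eta^2=-\Psi$. Such a square root need not exist: beyond the parity of the zeros of $\Psi$ there is a monodromy obstruction in $H^1(M;\Z/2)$, and your remark about ``absorbing odd-order zeros into a meromorphic square factor'' addresses neither. The paper sidesteps this entirely. It fixes once and for all a nowhere-vanishing holomorphic $1$-form $\theta$ on $M$, writes $\phi_k=f_k\theta$ so that the constraint becomes $f_1^2+f_2^2=H$ for a holomorphic \emph{function} $H$, and parametrizes solutions by the single function $\eta:=f_1-\imath f_2$ via
\[
(f_1,f_2)=\Bigl(\tfrac12\bigl(\eta+\tfrac{H}{\eta}\bigr),\ \tfrac{\imath}{2}\bigl(\eta-\tfrac{H}{\eta}\bigr)\Bigr),
\]
deforming multiplicatively by $\eta\mapsto e^{h}\eta$ with $h\in\Oscr(M)$. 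This substitution is always available (no square root is taken), keeps the divisor bookkeeping transparent, and is precisely what drives the spray in Lemma~\ref{lem:fixharmonic}. If you replace your $(\eta,\mathfrak g)$-data by this $(\theta,\eta)$-data, the rest of your sketch goes through and coincides with the paper's argument.
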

\noindent Here, the assumption on $\Lambda$ is also necessary by the Identity Principle. If one does not care about interpolation, that is, $\Lambda=\emptyset$ in Theorem \ref{th:fixsimple}, then the conclusion follows from \cite[Theorem B]{AlarconFernandezLopez2013CVPDE}.

Theorem \ref{th:fixsimple} is deduced from a more general result, see Theorem \ref{th:fixingCMI}, which ensures not only interpolation but also jet-interpolation of any order at each point $p\in\Lambda$, uniform approximation on a Runge subset of $M$, and prescription of the flux map of the examples, see Section \ref{sec:prelim} for details and definitions. 
Finally, we obtain Theorem \ref{th:GXsimple} as a consequence of Theorem \ref{th:fixingCMI} (see Theorem \ref{th:gasusmap} for a precise statement).


Our proof relies on the method of period dominating sprays of Weierstrass data which has been recently developed in the theories of approximation and interpolation for minimal surfaces in $\r^n$; see Section \ref{sec:lemmas} and specially Lemma \ref{lem:fixharmonic}. These ideas were first used in the study of minimal surfaces in \cite{AlarconForstneric2014IM} and have allowed to construct not only minimal surfaces in $\r^n$ but also complete null holomorphic curves in $\c^n$ with many different global behaviours; see for instance \cite{AlarconLopez2013MA,AlarconForstneric2014IM,AlarconDrinovecForstnericLopez2015Pre,AlarconDrinovecForstnericLopez2015PLMS,AlarconCastro2016} and references therein. 

%
%

\subsection*{Organization of the paper}

Section \ref{sec:prelim} is dedicated to stablish some notation and definitions to the well understanding of the paper. In section \ref{sec:lemmas} we stablish some technical result about the existence of sprays of holomorphic functions, see Lemma \ref{lem:fixharmonic}; it will be very useful in the proof of the main results. Next, we prove Proposition \ref{pro:moving2} in Section \ref{sec:complete} which is crucial to ensure completeness of the examples. Finally, Section \ref{sec:main} contains the proof of Theorem \ref{th:fixingCMI} and Theorem \ref{th:gasusmap} which trivially imply Theorems \ref{th:fixsimple} and \ref{th:GXsimple}.


\section{Preliminaries}\label{sec:prelim}

We denote $\imath=\sqrt{-1}$ and $\z_+=\{0,1,2,\ldots\}$.
Given an integer $n\in\n=\{1,2,3,\ldots\}$ and $\k\in\{\r,\c\}$, we denote by $||\cdot||$ and $\length(\cdot)$ the Euclidean norm and length in $\k^n$, respectively. 
We use the notation $|\cdot|$ for the absolute value (or complex modulus) when $\k=\r$ (or $\k=\c$).

%

Given a smooth connected surface $S$ (possibly with nonempty boundary) and a smooth immersion $X\colon S\to\k^n$, we denote by $\dist_X\colon S\times S\to\r_+$ the Riemannian distance induced on $S$ by the Euclidean metric of $\k^n$ via $X$; i.e.,
\[
	\dist_X(p,q):=\inf\{\length(X(\gamma)): 
	\text{$\gamma\subset S$ arc connecting $p$ and $q\}$},\quad p,q\in S.
\]

An immersed open surface $X\colon S\to \k^n$ $(n\ge 3)$ is said to be {\em complete} if the image by $X$ of any divergent arc on $S$ 
has infinite Euclidean length; equivalently, if the Riemannian metric on $S$ induced by $\dist_X$ is complete in the classical sense.


\subsection{Riemann surfaces and spaces of maps}\label{ss:RS}

Throughout the paper every Riemann surface will be considered connected unless the contrary is indicated.

Let $M$ be an open Riemann surface. Given a subset $A\subseteq M$ we denote by $\Cscr(A,\k^n)$ the space of continuous functions $A\to\k^n$. We also denote by $\Oscr(A)$ 
the space of functions $A\to \c$ which are holomorphic 
on an unspecified open neighbourhood of $A$ in $M$.
We use the notation $\Ascr(A)$ for the space of continuous functions on $A$ which are holomorphic on $\mathring A$, that is, $\Ascr(A)=\Oscr(\mathring A)\cap\Cscr(A,\c)$.

Similarly, by a {\em conformal minimal immersion} $A\to\r^n$ we mean a map $A\to\r^n$ which extends to a conformal minimal immersion in an unspecified neighbourhood of $A$.



Throughout the paper, we deal with some special subsets of an open Riemann surface: Runge subsets. 
A compact subset $K$ of an open Riemann surface $M$ is said to be {\em Runge} (also called {\em holomorphically convex} or {\em $\Oscr(M)$-convex}) if every continuous function $K\to\c$, holomorphic in the interior $\mathring K$, may be approximated uniformly on $K$ by holomorphic functions $M\to\c$. By the Runge-Mergelyan theorem \cite{Runge1885AM,Mergelyan1951DAN,Bishop1958PJM} this is equivalent to that $M\setminus K$ has no relatively compact connected components in $M$. 
%
The following particular kind of Runge subsets will play a crucial role in our argumentation.
\begin{definition}\label{def:admissible}
	A nonempty compact subset $S$ of an open Riemann surface $M$ is called {\em admissible} if it is Runge in $M$ and of the form $S=K\cup \Gamma$, where $K$ is the union of finitely many pairwise disjoint  smoothly bounded compact domains in $M$ and $\Gamma:=\overline{S\setminus K}$ is a finite union of pairwise disjoint smooth Jordan arcs meeting $K$ only in their endpoints (if at all) such that their intersections with the boundary $bK$ of $K$ are transverse. 
\end{definition}

Despite most of the upcoming results, in particular Lemma \ref{lem:fixharmonic}, may be proved for any admissible subset, we deal in this paper with {\em very simple admissible} subsets.
This kind of subsets are enough for the purpose of the paper and make the proofs more readable. 
They were first introduced in \cite{AlarconCastro2017}.
\begin{definition}\label{def:simple}
	Let $M$ be an open Riemann surface.
	An admissible subset $S=K\cup\Gamma\subset M$ (see Definition \ref{def:admissible}) will be said {\em simple} if $K\neq\emptyset$, every component of $\Gamma$ meets $K$, $\Gamma$ does not contain closed Jordan curves, and every closed Jordan curve in $S$ meets only one component of $K$. Further, $S$ will be said {\em very simple} if it is simple, $K$ has at most one non-simply connected component $K_0$, which will be called the {\em kernel component} of $K$, and every component of $\Gamma$ has at least one endpoint in $K_0$; in this case we denote by $S_0$ the component of $S$ containing $K_0$ and call it the {\em kernel component} of $S$.
\end{definition}
Note that a connected admissible subset $S=K\cup\Gamma$ in an open Riemann surface $M$ is very simple if and only if $K$ has $m\in\n$  components $K_0,\ldots, K_{m-1}$, where $K_i$ is simply-connected for every $i>0$, and $\Gamma=\Gamma'\cup\Gamma''\cup(\bigcup_{i=1}^{m-1} \gamma_i)$ where
\begin{itemize}
	\item $\Gamma'$ consists of components of $\Gamma$ with both endpoints in $K_0$,
	\item $\Gamma''$ consists of components of $\Gamma$ with an endpoint in $K_0$ and the other in $M\setminus K$, and
	\item $\gamma_i$ is a component of $\Gamma$ connecting $K_0$ to $K_i$ for each $i=1,\ldots,m-1$.
\end{itemize} 
Observe that, in such a case, $K_0\cup \Gamma'$ is a strong deformation retract of $S$.
In general, a very simple admissible subset $S\subset M$ is of the form $S=(K\cup\Gamma)\cup K'$ where $K\cup\Gamma$ is a connected very simple admissible subset and $K'\subset M\setminus (K\cup\Gamma)$ is a (possibly empty) union of finitely many pairwise disjoint smoothly bounded compact disks. 

\subsection{Weierstrass representation formula}\label{ss:wrep}
%

Let $M$ be an open Riemann surface and let $X\colon M\to\r^n$, $n\ge 3$, be a conformal minimal immersion. Denoting by $\di$ the complex linear part of the exterior differential $d=\di+\overline\di$ on $M$ (here $\overline\di$ denotes the antilinear part), we have that the $1$-form
\(
\partial X=(\di X_1,\ldots,\di X_n),
\)
assuming values in $\c^n$, is holomorphic, has no zeros, and satisfies 
\(
\sum_{j=1}^n (\di X_j)^2=0.
\)
Furthermore, the real part $\Re(\di X)$ of $\di X$ is an exact $1$-form on $M$ and the {\em flux map} (or simply, the {\em flux}) of $X$ is a group homomorphism denoted by $\Flux_X\colon \Hcal_1(M;\z)\to\r^n$, of the first homology group of $M$ with integer coefficients. It is defined by
\begin{equation}\label{eq:flux}
\Flux_X(\gamma)=\Im\int_\gamma\di X=-\imath\int_\gamma \di X,\quad \gamma\in \Hcal_1(M;\z),
\end{equation}
where $\Im$ denotes imaginary part.
On the other hand, every holomorphic $1$-form $\Phi=(\phi_1,\ldots,\phi_n)$ with values in $\c^n$, vanishing nowhere on $M$, satisfying the nullity condition
\begin{equation}\label{eq:nullity}
\sum_{j=1}^n (\phi_j)^2=0\quad \text{everywhere on $M$},
\end{equation}
and whose real part $\Re(\Phi)$ is exact on $M$, determines a conformal minimal immersion $X\colon M\to\r^n$ with $\di X=\Phi$ by the classical Enneper-Weierstrass, or simply Weierstrass, representation formula:
\begin{equation}\label{eq:Weierstrass}
X(p)=x_0+\Re\int_{p_0}^p 2\, \Phi,\quad p\in M,
\end{equation}
for any fixed base point $p_0\in M$ and initial condition $X(p_0)=x_0\in\r^n$. 

If we are given a holomorphic $1$-form $\theta$ never vanishing on $M$ (such exists by Oka-Grauert \cite{Grauert1957MA,Grauert1958MA}, see also Gunning and Narasimhan \cite{GunningNarasimhan1967MA}), then any holomorphic $1$-form $\Phi$ satisfying \eqref{eq:nullity}
may be written as $\Phi=f\theta$ where $f\colon M\to\Agot$ is a holomorphic function and
\begin{equation}\label{eq:nullquadric}
\Agot:=\{(z_1,\ldots,z_n)\in\c^n : z_1^2+\ldots z_n^2=0\}.
\end{equation}
The hypercuadric $\Agot$ is called the null quadric and $\Agot\setminus\{0\}$ is an Oka manifold (see \cite[Example 4.4]{AlarconForstneric2014IM}). This fact allows us to apply all the theory of maps from Stein manifolds into Oka manifolds to the study of minimal surfaces; recall that an open Riemann surface is an Stein manifold. See \cite{Forstneric2017book} for a specialized book on the field and \cite{AlarconForstneric2017Survey} for how these techniques apply to the study of minimal surfaces. 

Gunning and Narasimhan proved in \cite{GunningNarasimhan1967MA} that every open Riemann surface admits a never vanishing exact holomorphic $1$-form. Kusunoki and Sainouchi generalized this result to the existence of a holomorphic $1$-form with given divisor and periods on any open Riemann surface, see \cite{KusunokiSainouchi1971JMKU}.

We shall use the following result which easily follows by putting together the ideas from \cite{KusunokiSainouchi1971JMKU} and \cite{AlarconCastro2017}.
\begin{lemma}\label{lem:specialtheta}
	Let $M$ be an open Riemann surface and $\Lambda\subset M$ be a closed discrete subset. Let $\pgot\colon \Hcal_1(M;\z)\to\c^n$ be a group morphism and $F\colon \Lambda\to\c^n$ be a map. Additionally, for any $p\in\Lambda$, let $\gamma_p\subset M$ be a smooth oriented Jordan arc connecting a fixed point $p_0\in M\setminus \Lambda$ to $p\in\Lambda$.
	Then, there exists a never vanishing holomorphic $1$-form $\theta$ on $M$ such that
	\begin{enumerate}[\rm (i)]
		\item $\displaystyle \int_{\gamma}\theta = \pgot(\gamma)$ for any closed curve $\gamma\subset M$, and
		\item $\displaystyle\int_{\gamma_p} \theta= F(p)$  for any $p\in\Lambda$.
	\end{enumerate}
\end{lemma}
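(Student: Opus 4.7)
My plan is to follow the period-dominating spray strategy of Alarc\'on-Castro \cite{AlarconCastro2017} (on which the lemma is essentially modelled), combined with the Gunning-Narasimhan existence of a never-vanishing holomorphic $1$-form, and with an exponential ansatz that keeps non-vanishing automatic under deformation.

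Fix once and for all a never-vanishing holomorphic $1$-form $\tau$ on $M$ (Gunning-Narasimhan) and look for $\theta$ in the form $\theta = e^h\tau$ for an entire function $h\colon M\to\c$; this automatically makes $\theta$ nowhere zero and reduces (i)-(ii) to the transcendental system
\[
\int_\gamma e^h\tau = \pgot(\gamma),\qquad \int_{\gamma_p} e^h\tau = F(p),
\]
for $\gamma$ running over a basis of $\Hcal_1(M;\z)$ and $p\in\Lambda$ (I am reading the target of $\pgot$ as $\c$, since the integral of a scalar $1$-form lands in $\c$; the $\c^n$ in the statement appears to be a typo, but the argument extends componentwise in any case). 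I would then obtain $h$ as the uniform-on-compacta limit of a sequence of entire functions $\{h_n\}$ constructed along an exhaustion $M_1\Subset M_2\Subset\cdots$ of $M$ by very simple admissible compact sets (Definition \ref{def:simple}) arranged so that each $\Lambda\cap M_n$ is finite, each arc $\gamma_p$ with $p\in\Lambda\cap M_n$ is contained in $M_n$, and $\Hcal_1(M_n;\z)$ is generated by the first $k_n$ elements of a fixed basis $\{\delta_j\}$ of $\Hcal_1(M;\z)$ with each $\delta_j\subset M_n$ for $j\le k_n$. At stage $n$ the target is that $\theta_n:=e^{h_n}\tau$ exactly satisfy (i) on $\delta_1,\ldots,\delta_{k_n}$ and (ii) on $\Lambda\cap M_n$, together with the bound $|h_n-h_{n-1}|<2^{-n}$ uniformly on $M_{n-1}$, which is enough for Cauchy convergence on compact sets.

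The heart of the argument is the inductive step. Given $h_{n-1}$, I would first apply Runge-Mergelyan on the open Stein manifold $M$ to extend it approximately to an entire function defined in a neighbourhood of $M_n$, and then correct the finitely many new period and arc-integral errors through a finite-dimensional perturbation of the form $h_{n-1}+\sum_{j=1}^N \zeta_j\phi_j$. The spray functions $\phi_j\in\Oscr(M)$ are chosen with supports concentrated inside $M_n\setminus(M_{n-1}\cup\Lambda)$, placed along the newly appearing portions of the curves $\delta_j$ and the arcs $\gamma_p$, so that the differential at $\zeta=0$ of the map
\[
\zeta\longmapsto\bigg(\int_{\delta_j}e^{h_{n-1}+\sum_i\zeta_i\phi_i}\tau,\ \int_{\gamma_p}e^{h_{n-1}+\sum_i\zeta_i\phi_i}\tau\bigg)_{j\le k_n,\,p\in\Lambda\cap M_n}
\]
has full rank onto the finite-dimensional target. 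The implicit function theorem then yields a small $\zeta$ producing $h_n$, and the localization of the supports of $\phi_j$ preserves both the integrals already arranged on $M_{n-1}$ and the closeness of $h_n$ to $h_{n-1}$ there.

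The main obstacle is building the spray $\{\phi_j\}$ so that the differential above is genuinely surjective: this is the period-domination property underlying the forthcoming Lemma \ref{lem:fixharmonic}. Concretely it amounts to an Oka-Weierstrass-style interpolation, producing holomorphic functions on $M$ with prescribed small supports and prescribed first-order contribution to individual line integrals along disjoint arcs, and uses decisively that $M$ is an open (hence Stein) Riemann surface. Once the domination is secured, the induction and a standard Cauchy-limit argument deliver the limit $h$, and $\theta:=e^h\tau$ verifies (i) and (ii) by construction while being nowhere zero by the choice of ansatz.
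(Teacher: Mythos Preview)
Your overall strategy coincides with the paper's sketch: fix a nowhere-vanishing holomorphic $1$-form, build a convergent sequence of nonvanishing multipliers along a Runge exhaustion so that the period and interpolation constraints are met exactly on each stage, and pass to the limit. Your exponential ansatz $\theta=e^h\tau$ is a pleasant variant of the paper's $\theta=h\omega$ with $h\colon K_j\to\c_*$; it makes nonvanishing automatic and spares you the Hurwitz step at the end.

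There is, however, a genuine gap in your inductive step. You ask that the spray functions $\phi_j\in\Oscr(M)$ have ``supports concentrated inside $M_n\setminus(M_{n-1}\cup\Lambda)$'' and at the same time that the differential of
\[
\zeta\longmapsto\Big(\int_{\delta_j}e^{h_{n-1}+\sum_i\zeta_i\phi_i}\tau,\ \int_{\gamma_p}e^{h_{n-1}+\sum_i\zeta_i\phi_i}\tau\Big)_{j\le k_n,\;p\in\Lambda\cap M_n}
\]
be surjective onto the \emph{full} target. These two requirements are incompatible: the curves $\delta_j$ with $j\le k_{n-1}$ and the arcs $\gamma_p$ with $p\in\Lambda\cap M_{n-1}$ lie entirely in $M_{n-1}$, so if every $\phi_j$ is uniformly small there, the corresponding rows of the Jacobian are small and the differential cannot have full rank. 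Conversely, if you drop the old constraints from the target and only dominate the new ones, then the correction $\zeta$ may be large (the values $\int e^{h_{n-1}}\tau$ along the new curves are uncontrolled), and ``localization'' no longer guarantees $|h_n-h_{n-1}|<2^{-n}$ on $M_{n-1}$; moreover the small Mergelyan errors you introduce in the old constraints are never repaired and accumulate.

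The standard fix, which is exactly what \cite{AlarconCastro2017} does and what the paper's sketch invokes, is to insert a continuous-extension step before Mergelyan: enlarge $M_{n-1}$ to a very simple admissible set $S$ by attaching Jordan arcs carrying the new homology and the new interpolation points, and extend $h_{n-1}$ continuously along those arcs so that \emph{all} constraints (old and new) are satisfied exactly on $S$. After Mergelyan approximation on $S$ the total error is uniformly small; one then uses a spray dominating the \emph{full} period--interpolation map (with $\phi_j$ placed along all the curves, not just the new ones) and the implicit function theorem yields a small $\zeta$, hence the required closeness on $M_{n-1}$. With this correction your argument goes through and is essentially the paper's.
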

\begin{proof}[Sketch of the proof]
	%
	Fix $p_0\in M\setminus \Lambda$. We consider an exhaustion of $M$ by smoothly bounded connected compact Runge subsets $\{K_j\}_{j\in\n}$ with $p_0\in K_1$ and $p_j\cap bK_j=\emptyset$ for any $j\in\n$. We also assume that for any $j\in\n$ and all $p\in \Lambda \cap K_j$ we have that $\gamma_p\subset \mathring{K_j}$. 
	
	Combining the method from  \cite[proof of Theorem $1$]{KusunokiSainouchi1971JMKU} and \cite{AlarconCastro2017} we construct a sequence of holomorphic functions $\{h_j\colon K_j\to\c_* \}_{j\in\n}$ with the following properties:
	\begin{itemize}
		\item $\displaystyle |h_j(p)-h_{j-1}(p)|< \epsilon_j$ for any $p\in K_{j-1}$, where $\{\epsilon_j\}$ is a sequence of sufficiently small positive numbers (depending on $h_{j-1}$ at each step).
		\smallskip
		\item $\displaystyle \int_{\gamma}h_j\omega = \pgot(\gamma)$ for any closed curve $\gamma\subset K_j$.
		\item $\displaystyle\int_{\gamma_p} h_j\omega= F(p)$ for any $p\in K_j$.
	\end{itemize}
 Therefore, if the sequence $\{\epsilon_j\}_{j\in\n}$ is chosen decreasing to zero fast enough, then Hurwitz theorem ensures that there exists a limit map $h=\lim_{j\to\infty} h_j\colon M\to\c_*$. Thus, $\theta=h\omega$ is the desired $1$-form.
\end{proof}

\subsection{Jets of maps}

Let $\Mcal$ and $\Ncal$ be smooth manifolds without boundary, $x_0\in \Mcal$ be a point, and $f,g\colon \Mcal\to\Ncal$ be smooth maps. $f$ and $g$ have a {\em contact of order $k\in\z_+$} at the point $x_0$ if their Taylor series at this point coincide up to the order $k$. An equivalence class of maps $\Mcal\to\Ncal$ which have a contact of order $k$ at the point $x_0$ is called a {\em $k$-jet}; see e.\ g.\ \cite[\textsection 1]{Michor1980SP} for a basic reference.

In particular, if $\Omega$ is a neighbourhood of a point $p$ in an open Riemann surface $M$ and $f,g\colon \Omega\to\c^n$ are holomorphic functions, then they have a contact of order $k\in\z_+$, or the same $k$-jet, at the point $p$ if and only if $f-g$ has a zero of multiplicity (at least) $k+1$ at $p$. If this is the case, for any distance function ${\sf d}\colon M\times M\to \r_+$ on $M$ (not necessarily conformal) we have 
\begin{equation}\label{eq:O}    
|f-g|(q)=O({\sf d}(q,p)^{k+1})\quad \text{as $q\to p$}.
\end{equation}

Assume that $f,g\colon \Omega\to\r^n$ are harmonic maps, as, for instance, conformal minimal immersions. Then we say that they have a contact of order $k\in\z_+$ (or the same $k$-jet) at the point $p\in\Omega$ if $f(p)=g(p)$ and, if $k>0$, the holomorphic $1$-form $\di(f-g)$ has a zero of multiplicity at least $k$ at $p$. Again, if such a pair of maps $f$ and $g$ have the same $k$-jet at the point $p\in \Omega$ then \eqref{eq:O} formally holds. 

Throughout the paper we shall say that a holomorphic function has a zero of multiplicity $k\in\n$ at a point to mean that the function has a zero of multiplicity {\em at least} $k$ at the point. 
We will follow the same pattern when claiming that two functions have the same $k$-jet or a contact of order $k$ at a point.

\section{Sprays of holomorphic functions}\label{sec:lemmas}

In this section, we  construct sprays of holomorphic maps. 
We combine the arguments in \cite{AlarconFernandezLopez2012CMH,AlarconFernandezLopez2013CVPDE,AlarconCastroLopez2017}.

Let $K$ be a compact admissible subset of an open Riemann surface $M$. Assume that we are given a map $(f_1,f_2)\colon K\to \c^2$ of class $\Ascr(K)$ and a holomorphic function $H\colon M\to \c$ such that
\begin{equation}\label{eq:f1f2H}
	H=f_1^2+f_2^2, \quad \text{on $K$.}
\end{equation}
We consider
\begin{equation}\label{eq:etaH}
\eta\colon K\to\c,\qquad	\eta:=f_1-\imath f_2. 
\end{equation}
By \eqref{eq:f1f2H} and \eqref{eq:etaH}, we have that
\begin{equation}\label{eq:f1f2}
f_1=\frac12 \big(\eta+\frac H\eta\big)  \quad \text{and}\quad f_2=\frac\imath2 \big(\eta-\frac H\eta\big).
\end{equation}
We consider a map $\Phi\colon\Ascr(K)\to\Ascr(K)\times\Ascr(K)$ given by
\begin{equation}\label{eq:Phi}
\Phi(h):=\bigg(\frac12 \big(e^h\eta+\frac {H}{e^h\eta}\big),\frac\imath2 \big(e^h\eta-\frac {H}{e^h\eta}\big)\bigg).
\end{equation}
Notice that $\Phi(0)=(f_1,f_2)$.

The following result is needed.
\begin{lemma}\label{lem:linearindependent}
	Let $M$ be an open Riemann surface and $\theta$ be a holomorphic $1$-form never vanishing on $M$.
	Let $K\subset M$ be a very simple admissible subset, $\Lambda\subset \mathring K$ be a finite subset and $p_0\in \mathring K_0\setminus \Lambda$ be a point, where $K_0$ is the kernel component of $K$ (see Definition \ref{def:simple}).
	Take $k\in\n$ and let $(f_1,f_2)\colon K\to\c^2$ be a map of class $\Ascr(K)$. 
	
	Then, $(f_1,f_2)$ may be approximated uniformly on $K$ by maps $(g_1,g_2)\colon K\to\c^2$ of class $\Ascr(K)$ such that
	\begin{enumerate}[\rm (i)]
		\item $g_1$ and $g_2$ are (complex) linear independent.
		\item $g_1^2+g_2^2=f_1^2+f_2^2$.
		\item $((f_1,f_2)-(g_1,g_2))\theta$ is an exact $1$-form.
		\item $\displaystyle \int_{p_0}^p ((f_1,f_2)-(g_1,g_2))\theta = 0$ for any $p\in\Lambda$.\\
		\item $(g_1,g_2)-(f_1,f_2)$ has a zero of multiplicity $k$ at any $p\in\Lambda$.
	\end{enumerate}
Note that the integral in {\rm (iv)} does not depend on the chosen curve connecting $p_0$ with $p$ when {\rm (iii)} holds.
\end{lemma}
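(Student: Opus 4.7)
The plan is to find $(g_1, g_2)$ of the form $\Phi(h)$ for a suitable small $h \in \Ascr(K)$. Any such choice automatically satisfies (ii), because
\[
(g_1 - \imath g_2)(g_1 + \imath g_2) = (e^h \eta)\bigl(H e^{-h}/\eta\bigr) = H = f_1^2 + f_2^2,
\]
and if moreover $h$ vanishes to order at least $k$ at every point of $\Lambda$, then so does $e^h - 1$, whence $\Phi(h) - \Phi(0) = \Phi(h) - (f_1, f_2)$ vanishes to order $k$ at each such point, securing (v). A uniformly small $h$ on $K$ keeps $\Phi(h)$ close to $(f_1, f_2)$. The task therefore reduces to producing a small $h \in \Ascr(K)$ with zeros of multiplicity $\geq k$ at every $p \in \Lambda$ for which the period/evaluation conditions (iii)--(iv) and the linear-independence condition (i) also hold.

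Let $\alpha_1, \ldots, \alpha_\ell$ be loops in $K$ (supported in $K_0 \cup \Gamma'$) generating $\Hcal_1(K; \z) \cong \z^\ell$, write $m = \#\Lambda$, and set
\[
\Pcal(h) := \biggl(\int_{\alpha_i}\bigl(\Phi(h) - \Phi(0)\bigr)\theta,\ \int_{p_0}^{p}\bigl(\Phi(h) - \Phi(0)\bigr)\theta\biggr)_{i=1,\dots,\ell,\,p \in \Lambda} \in \C^{2\ell + 2m},
\]
so that (iii) and (iv) are jointly equivalent to $\Pcal(h) = 0$. A direct differentiation at $h = 0$ gives $D\Phi(0)[\phi] = \imath\phi(-f_2, f_1)$, hence the directional derivative of $\Pcal$ along $\phi$ consists of the integrals of $\imath\phi(-f_2, f_1)\theta$ over the loops $\alpha_i$ and the arcs $\gamma_p$. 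I would then build a finite-dimensional holomorphic spray $h_\zeta := \sum_{j=1}^{N}\zeta_j\phi_j$, with $\zeta \in \C^N$, $N \geq 2(\ell+m)+1$, and each $\phi_j \in \Ascr(K)$ vanishing to order $\geq k$ at every $p \in \Lambda$, chosen so that $\zeta \mapsto \Pcal(h_\zeta)$ is submersive at $\zeta = 0$. The recipe is the standard period-domination one: pick $2(\ell+m)$ pairwise disjoint short subarcs distributed along the $\alpha_i$'s and $\gamma_p$'s at points where $(f_1, f_2)\theta \neq 0$; for each subarc, use Runge--Mergelyan approximation on the admissible subset $K$ to produce a holomorphic function essentially supported there; finally multiply each such function by a fixed factor with zeros of order $k$ at every point of $\Lambda$. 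The resulting period-evaluation derivative matrix is approximately block-diagonal and therefore has full rank.

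By the implicit function theorem, the equation $\Pcal(h_\zeta) = 0$ then defines a local complex-analytic family $\Sigma \subset \C^N$ through $\zeta = 0$ of dimension $N - 2(\ell+m) \geq 1$. Every $\zeta \in \Sigma$ yields $(g_1, g_2) = \Phi(h_\zeta)$ satisfying (ii)--(v) and arbitrarily close to $(f_1, f_2)$ as $\zeta \to 0$. To secure (i), note that $(g_1, g_2)$ is linearly dependent on $K$ precisely when either $H \equiv 0$ on $K$ (a degenerate case which may be excluded by an initial small perturbation of $(f_1, f_2)$, since then $(f_1, f_2)$ lies on an isotropic line and can be handled separately) or the ratio $e^{2h_\zeta}\eta^2/H$ is constant on $K$. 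The latter is a proper closed analytic condition on the positive-dimensional $\Sigma$, failed by a generic $\zeta \in \Sigma$ arbitrarily close to $0$; any such $\zeta$ delivers the required $(g_1, g_2)$.

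The hardest point will be the period-dominating construction of the spray under the prescribed high-order vanishing at $\Lambda$: Runge--Mergelyan approximation on the very simple admissible subset $K$ provides the flexibility to build each $\phi_j$ concentrated where needed, and multiplying by a prescribed-zero factor implements the interpolation constraint, but keeping the $2(\ell+m)\times 2(\ell+m)$ period-evaluation matrix non-degenerate requires careful disjoint placement of the subarcs and tight approximation accuracy, following the template of the works cited at the start of Section \ref{sec:lemmas}.
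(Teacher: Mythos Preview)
Your overall strategy---deform via $\Phi(h)$ with $h$ vanishing to order $k$ on $\Lambda$, then cut down the parameter space by the period/evaluation constraints---matches the paper's. But there is a genuine gap in your submersion claim. The only case requiring any work is precisely when $f_1$ and $f_2$ are linearly \emph{dependent}, say $f_2=z_0f_1$; otherwise one simply takes $(g_1,g_2)=(f_1,f_2)$. In that dependent case your own formula $D\Phi(0)[\phi]=\imath\phi(-f_2,f_1)=\imath\phi\, f_1\,(-z_0,1)$ shows that the derivative of $\Pcal$ at $0$ lands, componentwise, in the fixed complex line $\C\cdot(-z_0,1)\subset\C^2$. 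Hence the rank of $D(\Pcal\circ h)(0)$ is at most $\ell+m$, never $2(\ell+m)$, no matter how you place the subarcs or choose the bump functions $\phi_j$. The map $\zeta\mapsto\Pcal(h_\zeta)$ is therefore \emph{not} a submersion at $\zeta=0$, and the implicit function theorem does not apply as you invoke it.

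The paper sidesteps this obstruction with a softer dimension argument. After disposing of the independent case trivially, it fixes one nonzero $\varphi_0\in\Ascr(K)$ with zeros of order $k$ on $\Lambda$ and considers the holomorphic map $\varphi\mapsto\Pcal(\varphi_0\varphi)$ from the infinite-dimensional space $\Ascr(K)$ into $\C^{2l}$. Since this map vanishes at $\varphi=0$ and the target is finite-dimensional, its zero locus near $0$ has positive dimension; no submersivity is needed. Any nonzero $\varphi$ in that locus gives $h=\varphi_0\varphi$ nonconstant, and then (i) is immediate from the explicit formula: when $f_2=z_0f_1$, the pair $(g_1,g_2)=\Phi(h)$ is linearly dependent iff $e^{h}$ is constant, i.e., iff $\varphi=0$. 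Your argument can be repaired along the same lines---replace the implicit function theorem by the lower bound $\dim\Pcal^{-1}(0)\ge N-2(\ell+m)$ valid for zero sets of holomorphic maps and take $N$ large---but as written the period-domination step fails exactly where the lemma has content.
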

\begin{proof}
	If $f_1$ and $f_2$ are complex linear independent it suffices to choose $(g_1,g_2)=(f_1,f_2)$. If that is not the case, there exists $z_0\in\c_*$ such that $f_2=z_0f_1$.
	
	Define $H\colon K\to\c$ as in \eqref{eq:f1f2H} and $\eta\colon K\to\c$ as in \eqref{eq:etaH}. 
	Notice that equation \eqref{eq:f1f2} holds.
	Set $\Lambda=\{p_1,\ldots,p_m\}$ for $m\in\n$ and take $\gamma_1,\ldots,\gamma_l$ with $l\geq m$ smooth Jordan curves in $K$ such that:
	\begin{itemize}
		\item $\gamma_j$ connects $p_0$ to $p_j$ for every $j=1,\ldots,m$. 
		\item $\gamma_{m+1},\ldots,\gamma_l\subset K$ are closed Jordan curves determining a basis of $\Hcal_1(K,\z)$.
		\item $\gamma_i\cap\gamma_j=\{p_0\}$ for every $i\neq j\in \{1,\ldots,l\}$.
	\end{itemize}
\noindent Recall that $K$ is a very simple admissible subset.
We consider the period-interpolation map $\Pcal=(\Pcal_1,\ldots,\Pcal_l)\colon \Ascr(K)\to(\c^2)^l$ whose $j$-th coordinate, $\Pcal_j\colon\Ascr(K)\to\c^2$, $j=1,\ldots,l$ is given by
\begin{equation}\label{eq:periodmap}
\Pcal_j(h):=\int_{\gamma_j} \big(\Phi(h)-(f_1,f_2)\big)\,\theta,
\end{equation}
where $\Phi\colon \Ascr(K)\to{\Ascr(K)\times\Ascr(K)}$ is the map defined in  \eqref{eq:Phi}. Note that $\Pcal(0)=0$.

Take a holomorphic map $\varphi_0\in\Ascr(K)$ that has a zero of multiplicity at least $k$ at any $p\in\Lambda$. Then, for any nonconstant holomorphic function $\varphi\in\Ascr(K)$ contained in a small open neighbourhood of the zero function in $\Ascr(K)$, the map $e^{\varphi_0\varphi}\eta$ clearly depends holomorphically on $\varphi$. 
If $\varphi=0$, then $\Pcal(\varphi_0\varphi)=\Pcal(0)=0$. Hence, since the space $\Ascr(K)$ is of infinite dimension, there exists a nonconstant function $\varphi\in \Ascr(K)$ arbitrarily close to $0$ such that $\Pcal(\varphi_0\varphi)=0$. For such $\varphi$, the map
\begin{equation}\label{eq:g1g2}
	(g_1,g_2):=\Phi(\varphi_0\varphi)=\bigg(\frac12 \big(e^{\varphi_0\varphi}\eta+\frac {H}{e^{\varphi_0\varphi}\eta}\big),\frac\imath2 \big(e^{\varphi_0\varphi}\eta-\frac {H}{e^{\varphi_0\varphi}\eta}\big)\bigg).	
\end{equation}
solves the lemma.

Indeed, from \eqref{eq:g1g2} and \eqref{eq:Phi} we have that $g_1$ and $g_2$ are linearly independent if and only if $e^\varphi$ and $1$ are, take into account that $f_2=z_0f_1$. Since $\varphi$ is nonconstant, then $g_1$ and $g_2$ are complex linear independent, and so {\rm (i)} holds.
If $\varphi$ is chosen close to $0$ enough, then $(g_1,g_2)$ approximated $(f_1,f_2)$ uniformly on $K$.
A straightforward computation gives {\rm (ii)} since $H=f_1^2+f_2^2$. 
From $\Pcal(e^{\varphi_0\varphi}\eta)=0$ we deduce {\rm (iii)} and {\rm (iv)}, see \eqref{eq:periodmap}.
Finally, taking into account \eqref{eq:g1g2}, \eqref{eq:Phi}, and \eqref{eq:f1f2} we obtain that $(g_1,g_2)-(f_1,f_2)$ has a zero of multiplicity at least the same that $\varphi_0$ at each $p\in\Lambda$; that is, {\rm (v)}.
\end{proof}

We now prove the main technical result of the paper. 

\begin{lemma}\label{lem:fixharmonic}
	Let $M$ be an open Riemann surface and $\theta$ be a holomorphic $1$-form never vanishing on $M$.
	Let $S=K\cup\Gamma\subset M$ be a very simple admissible subset and $L\subset M$ be a smoothly bounded compact domain such that $S\subset\mathring L$ and the kernel component $S_0$ of $S$ is a strong deformation retract of $L$ (see Definition \ref{def:simple}).
	Let $K_0,\ldots, K_{m'}$, $m'\in\z_+$ denote the components of $K$ contained in $S_0$, where $K_0$ is the kernel component of $K$. 
	Let $m\in\z_+$, $m\ge m'$, and let $p_0,\ldots, p_{m}$ be distinct points in $S$ such that $p_i\in \mathring K_i$ for all $i=0,\ldots,m'$ and $p_i\in\mathring K_0$ for all $i=m'+1,\ldots,m$.
	Take $k\in\n$ and let $H\colon L\to\c$ be a nonzero map of class $\Oscr(L)$ and  $f=(f_1,f_2)\colon S\to\c^2$ be a map of class $\Ascr(S)$ such that 
	\begin{equation}\label{eq:f12H}
		f_1^2+f_2^2=H|_S. 
	\end{equation}
	
	Then, $(f_1,f_2)$ may be approximated uniformly on $K$ by maps $\wt f=(\wt f_1,\wt f_2)\colon L\to\c^2$ of class $\Oscr(L)$ such that
	\begin{enumerate}[\rm (i)]
		\item $\wt f_1^2+\wt f_2^2=H $ on $L$.
		\smallskip
		\item $(\wt f-f)\theta$ is an exact $1$-form on $S$, and hence on $L$. Recall that $S$ is a strong deformation retract of $L$.
		\item $\displaystyle \int_{p_0}^{p_j} (\wt f-f)\theta=0$ for any $j=1,\ldots,m$ 
		\item $\wt f-f$ has a zero of multiplicity $k$ at any point $p_1,\ldots,p_m$.
	\end{enumerate}
By {\rm (ii)} the integral in {\rm (iii)} is well defined independently of the chosen curve connecting $p_0$ to $p_j$.
\end{lemma}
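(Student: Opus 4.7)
The plan is a two-stage construction: first extend $f$ from $S$ to $L$ by a Mergelyan argument while exactly preserving the quadric relation $f_1^2+f_2^2=H$, then correct the extension by a period-dominating spray that enforces (ii)--(iv) without destroying (i).

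For the extension I would parametrise the null quadric exactly as in the proof of Lemma \ref{lem:linearindependent}: with $\eta:=f_1-\imath f_2$ we have
\[
f_1=\tfrac12(\eta+H/\eta),\qquad f_2=\tfrac{\imath}{2}(\eta-H/\eta),
\]
and $H/\eta$ is holomorphic on $S$ since $H=\eta(f_1+\imath f_2)$ on $S$. A preliminary application of Lemma \ref{lem:linearindependent} lets me assume, after an arbitrarily small perturbation preserving (ii)--(iv) for the new pair, that $\eta$ is suitably generic on $S$ (in particular linearly independent factors, and zeros of $\eta$ controlled by those of $H$). By Mergelyan approximation on the Runge pair $S\subset L$, choose $\xi_0\in\Oscr(L)$ close to $\eta$ on $S$ such that $H/\xi_0\in\Oscr(L)$ is well defined. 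Setting
\[
F^{(0)}:=\Bigl(\tfrac12(\xi_0+H/\xi_0),\;\tfrac{\imath}{2}(\xi_0-H/\xi_0)\Bigr)\in\Oscr(L)^2
\]
gives an extension satisfying (i) and uniformly close to $f$ on $K$.

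For the correction, pick $\varphi_0\in\Oscr(L)$ vanishing to order at least $k$ at each $p_1,\ldots,p_m$, and form the spray
\[
\Phi(h):=\Bigl(\tfrac12\bigl(e^{\varphi_0 h}\xi_0+He^{-\varphi_0 h}/\xi_0\bigr),\;\tfrac{\imath}{2}\bigl(e^{\varphi_0 h}\xi_0-He^{-\varphi_0 h}/\xi_0\bigr)\Bigr)
\]
for $h$ ranging in a finite-dimensional parameter space $V\subset\Oscr(L)$ to be selected. Every $\Phi(h)$ satisfies (i) tautologically, and $\Phi(h)-F^{(0)}$ vanishes to order $k$ at each $p_j$ because $\varphi_0$ does. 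The remaining data (ii)--(iii) are encoded by the vanishing of the period-interpolation map
\[
\Pcal(h):=\Bigl(\int_{\gamma}\bigl(\Phi(h)-f\bigr)\theta\Bigr)_{\gamma\in\Gcal},\qquad \Pcal\colon V\to\c^{N},
\]
where $\Gcal$ consists of a basis of $\Hcal_1(L,\z)$ together with smooth arcs in $S$ from $p_0$ to each $p_j$.

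Following the technique of \cite{AlarconFernandezLopez2013CVPDE,AlarconCastroLopez2017}, I would then choose $V$ to be spanned by explicit test functions supported in small neighbourhoods of the curves in $\Gcal$ (and away from $p_1,\ldots,p_m$) so that $d\Pcal_0\colon V\to\c^{N}$ becomes surjective---the \emph{period-dominating} property of the spray. The implicit function theorem then supplies a small $h\in V$ with $\Pcal(h)=0$, provided $\xi_0$ was chosen close enough to $\eta$ on $S$; the map $\tilde f:=\Phi(h)$ then satisfies (i)--(iv) simultaneously. The principal obstacle is precisely the verification of period-domination: the factor $\varphi_0$ kills some infinitesimal variations at the $p_j$'s, and one must exploit the \emph{very simple} structure of $S$---notably that the kernel component $S_0$ carries the full homology of $L$ and that all arcs join canonically to $K_0$---to produce enough independent test functions in $V$ to hit all cohomological and interpolation targets at once.
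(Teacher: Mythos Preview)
Your two-stage plan is essentially the paper's own proof: parametrise the quadric by $\eta=f_1-\imath f_2$, Mergelyan-extend $\eta$ to some $\xi_0\in\Oscr(L)$ with $H/\xi_0$ holomorphic, and then correct by a period-dominating spray of the form $h\mapsto e^{\psi(h)}\xi_0$ with $\psi(h)$ vanishing to order $k$ at the $p_j$. The paper differs only cosmetically, absorbing the order-$k$ vanishing at the $p_j$ directly into the individual spray generators $h_{i,j}$ (via Mergelyan with jet interpolation, its property~(E)) rather than factoring out a single $\varphi_0$ as you do.

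There is one genuine omission in your argument for (iv). You observe that $\Phi(h)-F^{(0)}$ vanishes to order $k$ at each $p_j$ because $\varphi_0$ does, but condition (iv) asks that $\tilde f-f=\bigl(\Phi(h)-F^{(0)}\bigr)+\bigl(F^{(0)}-f\bigr)$ vanish to that order. Hence when you choose $\xi_0$ you must \emph{also} impose that $\xi_0-\eta$ vanish to order $k$ at each $p_j$; this is Mergelyan with jet interpolation and is exactly the paper's property~(I) for its $\tilde\eta$. Without it, (iv) fails. Your final concern about $\varphi_0$ obstructing period-domination is, on the other hand, unwarranted: the test directions in $V$ are concentrated near generic interior points $p_{i,j}$ of the curves $\gamma_j$, away from $p_1,\ldots,p_m$, and $\varphi_0$ can be taken nonvanishing there; multiplication by $\varphi_0$ then merely rescales the relevant partial derivatives of $\Pcal$ at $0$ by nonzero constants, so surjectivity of $d\Pcal_0$ is unaffected.
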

\begin{proof}
	Assume that $S$ is connected. If that is not the case, we just add a family of pairwise disjoint Jordan arcs connecting transversely $K_0$ to each connected component of $S$ different from $S_0$, recall Definition \ref{def:simple}. Next, we extend $(f_1,f_2)$ to those arcs continuously and verifying \eqref{eq:f12H} using \cite[Lemma 3.3]{AlarconCastro2017}.
	Assume also by Lemma \ref{lem:linearindependent} that $(f_1,f_2)$ are (complex) linear independent.
	
	We define $\eta$ as in \eqref{eq:etaH} and by \eqref{eq:f12H}, equation \eqref{eq:f1f2} holds, that is, 
	\begin{equation}\label{eq:both}
	\eta=f_1-\imath f_2\quad  \text{ and } \quad (f_1,f_2)=\bigg(\frac12 \big(\eta+\frac H\eta\big),\frac\imath2 \big(\eta-\frac H\eta\big)\bigg).
	\end{equation}
	Take $\gamma_1,\ldots,\gamma_l$ with $l\geq m$ smooth Jordan curves in $S$ such that:
	\begin{enumerate}[\rm (A)]
		\item $\gamma_j$ connects $p_0$ to $p_j$ for every $j=1,\ldots,m$. 
		\item $\gamma_{m+1},\ldots,\gamma_l\subset S_0$ are closed Jordan curves determining a basis of $\Hcal_1(S_0,\z)$, hence of $\Hcal_1(L,\z)$. Recall that $S_0$ is a strong deformation retract of $L$.
		\item $\gamma_i\cap\gamma_j=\{p_0\}$ for every $i\neq j\in \{1,\ldots,l\}$.
	\end{enumerate}
	
	Set $C:=\bigcup_{j=1}^l \gamma_j$ and notice that $C$ is a Runge subset of $M$ which is a strong deformation retract of $L$. Set also $\Lambda=\{p_1,\ldots,p_m\}$.
	
	We consider the period-interpolation map $\Pcal=(\Pcal_1,\ldots,\Pcal_l)\colon \Ascr(S)\to\c^{2l}$ whose $j$-th coordinate, $j=1,\ldots,l$ is given by equation
	\eqref{eq:periodmap} for the curves $\gamma_1,\ldots,\gamma_l$ recently defined and the functions $f_1$ and $f_2$ in the statement of the Lemma, see \eqref{eq:Phi}.
	Notice that for a function $h\in\Ascr(S)$, 
	the value of $\Pcal(h)\in\c^{2l}$ only depends on $h|_C$.

	\begin{claim}\label{cl:hij}
		For each $i=1,2$, there exist functions $h_{i,1},\ldots,h_{i,l}\in\Oscr(L)$ with a zero of order $k$ at each point $p\in\Lambda$ such that the map $\varphi\colon \c^{2l}\to\Oscr(L)$ defined by
	\begin{equation}\label{eq:varphi}
		\varphi(\zeta)= \sum_{i=1}^2\sum_{j=1}^{l}\zeta_{i,j} h_{i,j}(\cdot),
	\end{equation}
	for $\zeta=(\zeta_{i,j})\in\c^{2l}$, $i=1,2$ and $j=1,\ldots,l$, is a dominating spray with respect to $\Pcal$ and satisfies $\Pcal(\varphi(0))=0$. That is to say, $\varphi(0)=0$ and the map $\Pcal\circ \varphi \colon (\c^2)^l \to (\c^2)^l$ is a submersion at $\zeta=0$. In particular, there exists a Euclidean ball $W\subset \c^{2l}$ centred at the origin such that $(\Pcal\circ\varphi)\colon W\to(\Pcal\circ\varphi)(W)$ is a biholomorphism.
		\end{claim}
	\begin{proof}
		Fix $j=1,\ldots,l$. Since $f_1$ and $f_2$ are linear independent, we may choose two points, $p_{i,j}\in\mathring\gamma_j$, $i=1,2$ different from the endpoints, such that the vectors
		\[
			\left\lbrace\bigg( \frac12 \big(\eta+\frac H\eta\big), \frac\imath2 \big(\eta-\frac H\eta\big) \bigg)(p_{i,j})\right\rbrace_{i=1,2}
		\]
		determine a basis of $\c^2$, see \eqref{eq:both}.
		Consider for each $i=1,2$ a continuous function $g_{i,j}\colon C\to\c$ supported on a small neighbourhood of the point $p_{i,j}\in\gamma_j$; the precise value will be specified later. In particular, we have that $g_{i,j}(q)=0$ for each $q\in C\setminus \gamma_j$. 
		
		Given $\zeta=(\zeta_{i,j})\in\c^{2l}$, we consider a continuous map $\wh \varphi(\zeta)\colon C\to\c$, depending holomorphically on $\zeta\in\c^{2l}$, defined by
		\begin{equation}\label{eq:spray}
		\wh\varphi(\zeta)= \sum_{i=1}^2\sum_{j=1}^{l}\zeta_{i,j} g_{i,j}(\cdot).
		\end{equation}
		The differential of $\Pcal\circ\wh\varphi$ with respect to $\zeta_{i,j}$ at $\zeta=0$ may be expressed for any $i=1,2$ as
		\[
		\frac{\partial\Pcal_m\circ\wh\varphi}{\partial \zeta_{i,j}}\bigg|_{\zeta=0}(\zeta)=
		\left\lbrace\begin{matrix}
			(0,0), & j\neq m,\\
			\\
			\displaystyle \int_{\gamma_j}g_{i,j} \left( \frac12 \big(\eta-\frac{H}{\eta} \big),\frac{\imath}{2}\big(\eta+\frac{H}{\eta} \big)\right) \theta, & j=m.
		\end{matrix}\right.
		\]
		
		We claim that we may choose the values of each function $g_{i,j}$ at the curve $\gamma_j$ in order to the vectors
		\begin{equation}\label{eq:vectors}
		\left\lbrace\frac{\partial\Pcal_j\circ\wh\varphi}{\partial \zeta_{1,j}}\bigg|_{\zeta=0}(\zeta),\ \frac{\partial\Pcal_j\circ\wh\varphi}{\partial \zeta_{2,j}}\bigg|_{\zeta=0}(\zeta)\right\rbrace
		\end{equation}
		determine a basis of $\c^2$ and hence the differential of $\Pcal\circ\wh\varphi$ at $\zeta=0$ is surjective. Indeed, let $\gamma_j\colon [0,1]\to\gamma_j$ be a parametrization of the curve $\gamma_j$, we identify $\gamma_j\equiv \gamma_j([0,1])$. For each $i=1,2$ there exists a point $t_{i,j}\in(0,1)$ such that $\gamma_j(t_{i,j})=p_{i,j}$.
		Take a positive number $\rho>0$ small enough such that 
		\[
		[t_{1,j}-\rho,t_{1,j}+\rho]\cap[t_{2,j}-\rho,t_{2,j}+\rho]=\emptyset\quad \text{and}\quad [t_{i,j}-\rho,t_{i,j}+\rho]\subset [0,1],\ i=1,2.
		\]
		We now define each of the function $g_{i,j}$, $i=1,2$ such that they are supported on a small neighbourhood of $t_{i,j}$, that is:
		 \begin{equation}
		 g_{i,j}(t)=0 \quad \text{if}\quad t\in [0,1]\setminus [t_{i,j}-\rho,t_{i,j}+\rho] 
		 \end{equation}
		 and also such that
		 \[
		 	\int_{0}^1g_{i,j}(t)\;dt=\int_{t_{i,j}-\rho}^{t_{i,j}+\rho}g_{i,j}(t)\;dt=1.
		 \]
		 
		 We claim that the functions already defined satisfy the conclusion of the claim except that they are only defined on $C$; we will deal with this later.
		 Indeed, for $\rho>0$ sufficiently small we have that
		 \begin{equation*}\label{eq:derivaties}
		 	\frac{\partial\Pcal_j\circ\wh\varphi}{\partial \zeta_{i,j}}\bigg|_{\zeta=0}(\zeta) = 
		 	\int_{\gamma_j}g_{i,j} \left( \frac12 \big(\eta-\frac{H}{\eta} \big),\frac{\imath}{2}\big(\eta+\frac{H}{\eta} \big)\right) \theta
		 \end{equation*}
		 takes approximately the value
		 \[
		 	\bigg( \frac12 \big(\eta-\frac{H}{\eta} \big),\frac{\imath}{2}\big(\eta+\frac{H}{\eta} \big)\bigg)(t_{i,j}) \ \theta(\gamma_j(t_{i,j}),\dot\gamma_j(t_{i,j}))
		 \]
		 for any $i=1,2$. Since the $1$-form $\theta$ never vanishes on $M$ and since
		 \[
		 \bigg( \frac12 \big(\eta+\frac{H}{\eta} \big),\frac{\imath}{2}\big(\eta-\frac{H}{\eta} \big)\bigg) =
			\bigg( \frac12 \big(\eta-\frac{H}{\eta} \big),\frac{\imath}{2}\big(\eta+\frac{H}{\eta} \big)\bigg)
			\left( \begin{matrix}
			0 & \imath\\
			-\imath & 0
			\end{matrix}\right),
		 \]
		 we have that the vectors 
		 in \eqref{eq:vectors} are a basis of $\c^2$ provided that $\rho>0$ is small enough. 
		 
		 To finish the claim we have to use Mergelyan Theorem with jet-interpolation (see \cite{Forstneric2017book}) to approximate each continuous function $g_{i,j}\colon C\to\c$ by a holomorphic function $h_{i,j}\colon L\to\c$ such that
		 \begin{enumerate}[{\rm {(A)}}]
		 	\item[{\rm {(D)}}] $h_{i,j}$ approximates $g_{i,j}$ uniformly on $C$.
		 	\item[{\rm {(E)}}] $h_{i,j}$ has a zero of order $k \in\n$ at each point $p\in\Lambda$.
		 	\item[{\rm {(F)}}] $h_{i,j}$ vanishes at the points of $L\setminus C$ where $\eta$ vanishes. 
		 \end{enumerate}
	 Recall that the function $g_{i,j}$ vanishes at a neighbourhood of any $p\in\Lambda$. Therefore, if the approximation of {\rm (D)} is close enough, the map $\varphi$ defined in \eqref{eq:varphi}, which is obtained by replacing in \eqref{eq:spray} each of $g_{i,j}$ by $h_{i,j}$, is also dominating with respect to $\Pcal$ and verifies $\varphi(0)=0$.
	\end{proof}

	Mergelyan Theorem with jet-interpolation applied to $\eta\colon S\to\c$ provides a holomorphic function $\wt \eta\colon L\to\c$ such that
	\begin{enumerate}[\rm (A)]
		\item[{\rm {(G)}}] $\wt\eta$ approximates $\eta$ uniformly on $S$.
		\item[\rm (H)] $\wt \eta$ has exactly the same zeros of $\eta$ with the same multiplicity. Hence $H/\wt\eta$ is holomorphic.
		\item[\rm (I)] $\wt \eta-\eta$ has a zero of multiplicity $k$ at any point $p_1,\ldots,p_m$.
	\end{enumerate}

	For a close to zero $\zeta\in\c^{2l}$, we define the map $\Phi_\zeta\colon L\to\c^2$ given by
	\begin{equation}\label{eq:Phizeta}
		\Phi_\zeta:=
		 \left( \frac12 \big(e^{\varphi(\zeta)}\wt\eta+\frac {H}{e^{\varphi(\zeta)}\wt\eta}\big), \frac\imath2 \big(e^{\varphi(\zeta)}\wt\eta-\frac {H}{e^{\varphi(\zeta)}\wt\eta}\big) \right),
	\end{equation}
	recall \eqref{eq:varphi}. Notice that $\Phi_\zeta$ is holomorphic and depends holomorphically on $\zeta\in\c^{2l}$, see property {\rm (H)}.
	Furthermore, provided that the approximations involved in {\rm (D)} and {\rm (G)} are close enough, the map $\wt\Pcal=(\wt\Pcal_1,\ldots,\wt\Pcal_l)\colon \c^{2l}\to \c^{2l}$ given by 
	\begin{equation}\label{eq:wtPcal}
	\wt\Pcal_j(\zeta):=\int_{\gamma_j} \big(\Phi_{\zeta}-(f_1,f_2)\big)\theta
	\end{equation}
	is also a submersion at $\zeta=0$. Therefore, there exists a close to zero $\wt\zeta\in\c^{2l}$ such that $\wt \Pcal(\wt\zeta)=0\in\c^{2l}$.
	Thus, the map $(\wt f_1,\wt f_2)\colon L\to\c^2$ defined by
	\(
		(\wt f_1,\wt f_2):=\Phi_{\wt\zeta}
	\)
	is holomorphic and verifies conditions {\rm (i)}--{\rm (iv)}. Indeed, a straightforward computation gives that $\wt f_1$ and $\wt f_2$ verify condition {\rm (i)}.
	We have that $(\wt f_1,\wt f_2)$ approximates $(f_1,f_2)$ uniformly on $S$ by {\rm (G)} and provided that $\wt \zeta$ is chosen small enough, see \eqref{eq:both} and \eqref{eq:Phizeta}.
	$\wt \Pcal(\wt\zeta)=0$  implies conditions {\rm (ii)} and {\rm (iii)}, see {\rm (A)} and {\rm (B)}. 
	Finally, by  \eqref{eq:varphi} we have that the functions
	\[
	e^{\varphi(\wt\zeta)}\wt\eta - \eta\quad \text{and} \quad \frac{H}{e^{\varphi(\wt\zeta)}\wt\eta}-\frac H\eta
	\]
	have a zero of multiplicity $k$ at each point $p\in\Lambda$, see {\rm (I)} and take into account that $\eta-e^{\varphi(\wt\zeta)}\wt\eta$ vanishes at the points where $\eta$ and $\wt\eta$ do  by {\rm (F)} and {\rm (H)}. Therefore, \eqref{eq:both} and \eqref{eq:Phizeta} imply that $\wt f-f$ has a zero of multiplicity $k$ at any point $p\in\Lambda$, that is, condition {\rm (iv)}.
\end{proof}

\section{Completeness}\label{sec:complete}
This section is dedicated to prove the technical results needed to ensure completeness of the solutions. We start with the following lemma.

\begin{lemma}\label{lem:completeness}
	Let $M$ be an open Riemann surface and $\theta$ be a holomorphic $1$-form never vanishing on $M$.
	Let $K\subset L\subset M$ be smoothly bounded connected compact domains such that $K\subset\mathring L$ and  $L\setminus \mathring K$ is a family of pairwise disjoint compact annuli.
	Let $\Lambda\subset \mathring K$ be a finite subset and take $k\in\n$ and $p_0\in \mathring K\setminus \Lambda$.
	Let also $H\colon L\to\c$ be a nonzero holomorphic function and $f=(f_1,f_2)\colon K\to\c^2$ be a map of class $\Ascr(K)$ such that $f_1^2+f_2^2=H|_K$.
	
	Given $\tau>0$, $(f_1,f_2)$ may be  approximated uniformly on $K$ by maps $\wt f=(\wt f_1,\wt f_2)\colon L\to\c^2$ of class $\Oscr(L)$ such that
	\begin{enumerate}[\rm (i)]
		\item $\wt f_1^2+\wt f_2^2=H$ on $L$.
		\item $(\wt f-f)\,\theta$ is an exact $1$-form on $K$, hence on $L$. Recall that $L\setminus \mathring K$ are annuli.
		\item $\displaystyle \int_{p_0}^{p} (\wt f-f)\,\theta=0$ for any $p\in\Lambda$. 
		\item $\wt f-f$ has a zero of multiplicity $k$ at any point $p\in\Lambda$.
		\item If $\alpha\subset L$ is a curve connecting $p_0$ with $bL$, then \[\displaystyle\int_\alpha \big(|\wt f_1|^2+|\wt f_2|^2+|H|\big)\|\theta\|>\tau.\]
	\end{enumerate}
The integral in {\rm (iii)} is well defined independently of the chosen curve by {\rm (ii)}.
\end{lemma}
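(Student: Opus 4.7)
The plan is to apply Lemma \ref{lem:fixharmonic} to a very simple admissible enlargement $S = K \cup \Gamma \subset \mathring L$ of $K$, where $\Gamma$ is a ``radial comb'' of pairwise disjoint smooth Jordan arcs placed inside the annular components of $L \setminus \mathring K$, and on which we prescribe $f$ of very large modulus. Properties (i)--(iv) of the present lemma follow directly from the corresponding items of Lemma \ref{lem:fixharmonic}; condition (v) is obtained by combining the uniform approximation on $S$ with Cauchy estimates and the density of the comb.

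\textbf{The comb.} Treat each annular component $A$ of $L \setminus \mathring K$ separately, and fix a diffeomorphism of $A$ with a standard flat cylinder whose two boundary circles correspond to $bK \cap bA$ and $bL \cap bA$. For $N \in \N$ and $\epsilon > 0$ to be fixed later, take $N$ equispaced pairwise disjoint smooth ``radial'' Jordan arcs $\gamma_1, \dots, \gamma_N \subset A \cap \mathring L$, each joining a point of $bK \cap bA$ transversely to an interior point of $A$ at radial distance $\epsilon$ from $bL$. Setting $\Gamma := \bigcup_i \gamma_i$, the set $S := K \cup \Gamma$ is a very simple admissible subset of $M$ with kernel $K_0 = K$, and $S_0 = S$ is a strong deformation retract of $L$: push each point of $A$ radially toward $bK$ until its distance from $bL$ is at least $\epsilon$, then laterally toward the nearest arc of $\Gamma$, while keeping $bK \cup \Gamma$ pointwise fixed.

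\textbf{Prescribing $f$ on $\Gamma$ and applying Lemma \ref{lem:fixharmonic}.} Set $\eta := f_1 - i f_2$ on $K$, so that $\eta(f_1 + if_2) = H|_K$. For a large parameter $M > 0$ to be chosen, extend $\eta$ continuously to each $\gamma_i$ so that $|\eta| \ge \sqrt{2M}$ on the interior of $\gamma_i$ and $\eta$ matches the given value at the endpoint on $bK$; isolated zeros of $H$ in $K$ are handled by the local factorization of $H$ as usual. Define $f$ on $\Gamma$ by
\[
f_1 := \tfrac{1}{2}(\eta + H/\eta), \qquad f_2 := \tfrac{i}{2}(\eta - H/\eta),
\]
so that $f_1^2 + f_2^2 = H$ on $\Gamma$ and $|f_1|^2 + |f_2|^2 = \tfrac{1}{2}(|\eta|^2 + |H|^2/|\eta|^2) \ge M$ on $\Gamma$. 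Apply Lemma \ref{lem:fixharmonic} with $S$, $L$, the extended $f$, $H$, $\Lambda$, $k$, and $p_0$ to obtain $\tilde f = (\tilde f_1, \tilde f_2) \in \Oscr(L)^2$ satisfying conditions (i)--(iv) of the present lemma and approximating $f$ uniformly on $S$ to prescribed precision.

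\textbf{Integral estimate and main obstacle.} If the uniform approximation is close enough then $|\tilde f|^2 \ge M/2$ on $\Gamma$. The key technical point is a Cauchy estimate with $M$-independent radius: using the explicit form
\[
\tilde f = \tfrac{1}{2}(e^{\varphi(\tilde\zeta)} \tilde\eta + H/(e^{\varphi(\tilde\zeta)} \tilde\eta), \, i(e^{\varphi(\tilde\zeta)} \tilde\eta - H/(e^{\varphi(\tilde\zeta)} \tilde\eta)))
\]
from the proof of Lemma \ref{lem:fixharmonic}, together with a Mergelyan approximation $\tilde\eta$ of $\eta$ and a spray parameter $\tilde\zeta$ chosen so that $\|\tilde f\|_{L^\infty(L)} = O(M^{1/2})$, one obtains a radius $\delta_0 > 0$ depending only on $\dist(\Gamma, bL)$ such that $|\tilde f|^2 \ge M/4$ throughout the $\delta_0$-neighborhood of $\Gamma$ in $L$. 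Choosing $N$ large enough that the lateral spacing of the comb is smaller than $\delta_0$ forces this neighborhood to contain the entire annular region up to radial distance $\epsilon$ from $bL$, which every curve $\alpha \subset L$ connecting $p_0$ with $bL$ must traverse with $\|\theta\|$-length bounded below by a positive constant $C_0 = C_0(L, K, \theta, \epsilon)$. Hence
\[
\int_\alpha (|\tilde f_1|^2 + |\tilde f_2|^2 + |H|) \, \|\theta\| \;\ge\; \tfrac{M}{4} \cdot C_0,
\]
which exceeds $\tau$ for $M$ sufficiently large. The delicate point, and the main obstacle, is the uniform-in-$M$ control $\|\tilde f\|_{L^\infty(L)} = O(M^{1/2})$ required to make the Cauchy estimate work with a radius $\delta_0$ independent of $M$.
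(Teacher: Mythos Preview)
Your proposal has a real gap, and you have correctly located it yourself: the claimed control $\|\tilde f\|_{L^\infty(L)}=O(M^{1/2})$ is not proved and is, in general, not available. Lemma~\ref{lem:fixharmonic} gives uniform approximation of $f$ on the admissible set $S=K\cup\Gamma$, but says nothing about the size of the holomorphic extension $\tilde f$ on $L\setminus S$. Mergelyan-type approximation simply does not control the sup norm of the extension off the Runge set; between your comb arcs $\tilde\eta$ (and hence $\tilde f$) may be arbitrarily large. Without that bound your Cauchy-estimate step collapses: the radius $\delta_0$ on which $|\tilde f|^2\ge M/4$ could shrink with $M$, and there is also a circularity in the order of quantifiers, since increasing the number $N$ of comb arcs changes $S$, changes the Mergelyan extension, and hence changes whatever sup-norm bound you might try to extract---so you cannot fix $\delta_0$ first and then choose $N$.

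The paper avoids this difficulty by replacing your one-dimensional comb with a family of two-dimensional compact disks $D_1,\dots,D_m$ placed in the annulus according to a Jorge--Xavier type labyrinth. The labyrinth has the property that any arc from $p_0$ to $bL$ that avoids slightly smaller disks $D_i'\subset \mathring D_i$ already satisfies $\int_\alpha |H|\,\|\theta\|>\tau$; and for arcs that do meet some $D_i'$, one prescribes $(g_1,g_2)$ with large modulus on the open disks $D_i$, so that uniform approximation of $g$ on $S=K\cup D$ by $\tilde f$ directly yields $\int_{\alpha\cap D}(|\tilde f_1|^2+|\tilde f_2|^2)\|\theta\|>\tau$. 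Because the $D_i$ have nonempty interior, the approximation on $S$ already controls $\tilde f$ on an open set, and no Cauchy estimate or global $L^\infty$ bound is needed. This is the essential idea your argument is missing.
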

\begin{proof}
 For simplicity of exposition we assume that $L\setminus \mathring K$ is connected and hence a single annulus. In the general case, we just apply the upcoming reasoning to each of the annuli.

 Recall that $H\colon L\to\c$ is a nonzero function and $\theta$ never vanishes on $M$. Since  $L\setminus \mathring K$ is an annulus, there exists a family of pairwise disjoint, smoothly bounded, compact disks $D_1,\ldots,D_m\subset L\setminus \mathring K$. For any $i=1,\ldots,m$ we consider a compact disk ${D_i'}\subset \mathring D_i$ such that if $\alpha\subset L$ is an arc connecting $p_0$ with $bL$  and $\alpha$ does not intersect any of the disk $D_i'$ for any $i=1,\ldots,m$, then
%
 \begin{equation}\label{eq:pathnotD}
 \int_{\alpha} |H|\,\|\theta\|>\tau.
 \end{equation}
To construct such disks $D_1,\ldots,D_m
$ we use pieces of a labyrinth of Jorge-Xavier type contained in $\mathring L\setminus K$, see \cite{JorgeXavier1980AM}. For a detailed description of the disks see \cite[Lemma 4.1]{AlarconFernandezLopez2013CVPDE}. 

%

 Set $D:=\bigcup_{i=1}^m D_i$ and $S:=K\cup D$.
 Note that $S$ is a smoothly bounded compact domain. 
 Pick a holomorphic map $g=(g_1,g_2)\colon S\to\c^2$ such that
 \begin{enumerate}[\rm (A)]
 	\item $g_1^2+g_2^2=H$ on $S$.
 	\item $(g_1,g_2)=(f_{1},f_{2})$ on $K$.
 	\item If $\alpha\subset L$ is an arc that intersects any of the disks $D_i'$, then
 	\[\displaystyle \int_{\alpha\cap D}\big(|g_1|^2+|g_2|^2\big)\|\theta\|>\tau.\]
 \end{enumerate}
 Note that $D_i\setminus \mathring D_i'$ is an annulus for any $i=1,\ldots,m$.
 The existence of such a map $(g_1,g_2)$ is clear. Necessarily $(g_1,g_2)=(f_1,f_2)$ on $K$ and take for instance $g_1$ big enough on each $D_i\setminus \mathring D_i'$ in order to property {\rm (C)} holds, recall that $D_i$ is a disk and hence simply connected.
 
 Lemma \ref{lem:fixharmonic} applied to
 \[
 M,\quad \theta,\quad S\subset L, \quad \Lambda\subset K\subset S, \quad p_0\in K,\quad k,\quad H,\quad \text{and} \quad g=(g_1,g_2)
 \]
 ensures that $g=(g_1,g_2)$ may be  approximated uniformly on $S$ by holomorphic maps $\wt f=(\wt f_1,\wt f_2)\colon L\to\c^2$ such that
 	\begin{enumerate}[\rm (D)]
 	\item[\rm (D)] $\wt f_1^2+\wt f_2^2=H $ on $L$.
 	\smallskip
 	\item[\rm (E)] $(\wt f-g)\,\theta$ is an exact $1$-form on $S=K\cup D$, and hence on $L$. Recall that $K$ is a strong deformation retract of $L$ and $D$ a union of pairwise disjoint disks.
 	\smallskip
 	\item[\rm (F)] $\displaystyle \int_{p_0}^{p} (\wt f-g)\,\theta=0$ for any $p\in \Lambda\subset K$. 
 	\item[\rm (G)] $\wt f-g$ has a zero of multiplicity $k$ at any point $p\in\Lambda\subset K$.
 \end{enumerate}

We claim that $\wt f=(\wt f_1,\wt f_2)$ solves the lemma.
Indeed, {\rm (i)} equals {\rm (D)}.
By {\rm (B)} and {\rm (E)} we have {\rm (ii)}.
Furthermore, {\rm (iii)} and {\rm (iv)} follows from {\rm (B)}, {\rm (F)}, and {\rm (G)}.

Finally, let us check that {\rm (v)} holds. Take an arc $\alpha\subset L$ connecting $p_0$ with $bL$, we distinguish cases depending on $\alpha$ crosses some $D_i'$ or not.
If $\alpha$ does not intersect $D_i'$ for any $i=1,\ldots,m$, then by \eqref{eq:pathnotD} we have
\[
	\int_\alpha \big(|\wt f_1|^2+|\wt f_2|^2+|H|\big)\|\theta\|\ge \int_{\alpha} |H|\,\|\theta\|>\tau,
\]
whereas if $\alpha$ intersects any of the disks $D_i'$, $i=1,\ldots,m$, then if the approximation involved is close good enough and by {\rm (C)} and {\rm (B)}, we have that
\[
\int_\alpha \big(|\wt f_1|^2+|\wt f_2|^2+|H|\big)\|\theta\|
\ge \int_{\alpha\cap D} \big(|\wt f_1|^2+|\wt f_2|^2\big)\,\|\theta\|>\tau.
\]
Hence, condition {\rm (v)} holds.
\end{proof}

To finish the section, let us prove the following result. 

\begin{proposition}\label{pro:moving2}
	Let $M$ be an open Riemann surface, $K\subset M$ be a smoothly bounded compact Runge domain, and $\Lambda\subset M$ be a closed discrete subset.
	Let $\theta$ be a holomorphic $1$-form never vanishing on $M$.
	Let $\Omega_p\ni p$, $p\in\Lambda$, be  pairwise disjoint compact neighbourhoods of the points in $\Lambda$. Set $\Omega:=\bigcup_{p\in\Lambda}\Omega_p\subset M$.
	Let $H\colon M\to\c$ be a nonzero holomorphic function, $F\colon\Omega\to\r^2$ be a harmonic map, and assume that there is a holomorphic map $f=(f_1,f_2)\colon K\cup\Omega\to\c^2$ such that $f_1^2+f_2^2=H$ on $K\cup \Omega$ and $f\theta=\partial F$ on $\Omega$.
	Let also $p_0\in \mathring K\setminus \Omega$ be a point, $k\colon \Lambda\to\n$ be a map and  $\pgot\colon \Hcal_1(M;\z)\to\r^2$ be a group morphism such that
	\begin{equation}\label{eq:pgot}
	\pgot(\gamma)=\Im\int_{\gamma} (f_1,f_2)\,\theta \text{ for any closed curve } \gamma\subset K.
	\end{equation}
	
	Given $\epsilon >0$, there exists a holomorphic map $\wt f=(\wt f_1,\wt f_2)\colon M\to\c^2$ such that
	\begin{enumerate}[\rm (i)]
	\item $\wt f_1^2+\wt f_2^2=H $ on $M$.
	\smallskip
	\item 
	$\|(\wt f_1,\wt f_2)(p)-(f_1,f_2)(p)\|<\epsilon$ for any $p\in K$.
	\item $\Re (\wt f\theta)$ is an exact $1$-form and $\displaystyle\Im\int_{\gamma} \wt f\theta=\pgot(\gamma)$ for any closed curve $\gamma$ on $M$.
	\item $\displaystyle \Re\int_{p_0}^{p} \wt f\theta=F(p)$ for any $p\in\Lambda$.
	\item $\wt f-f$ has a zero of multiplicity $k(p)$ at any point $p\in\Lambda$.
	\item $\big(|\wt f_1|^2+|\wt f_2|^2+|H|\big)\|\theta\|^2$ is a complete Riemannian metric on $M$ except for the points where $f_1$ and $f_2$ (and hence $H$) vanish simultaneously. That is, $|\wt f_1|^2+|\wt f_2|^2+|H|>0$ at any point $p\in M\setminus \{q\in K\cup \Omega : f_1(q)=f_2(q)=0\}$ and
	if $\alpha$ is a divergent arc in $M$, then $\displaystyle \int_\alpha \big(|\wt f_1|^2+|\wt f_2|^2+|H|\big)\|\theta\|$ is infinite.
	\end{enumerate}
\end{proposition}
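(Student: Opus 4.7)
My plan is a recursive exhaustion argument built on Lemmas \ref{lem:fixharmonic} and \ref{lem:completeness}. First I fix an exhaustion $\{L_n\}_{n\ge 0}$ of $M$ by smoothly bounded compact connected Runge domains with $L_0\supset K\cup\bigcup\{\Omega_p: p\in\Lambda\cap L_0\}$, $L_n\Subset\mathring L_{n+1}$, $\bigcup_n L_n=M$, $\Lambda\cap L_n$ finite, and $\Omega_p\subset\mathring L_n$ whenever $p\in L_n$. After refining, each pair $L_n,L_{n+1}$ is either \emph{noncritical} (no change of topology and no new interpolation point, so $L_{n+1}\setminus\mathring L_n$ is a disjoint union of annuli) or \emph{critical/interpolation} (adds one handle and/or new $\Omega_p$'s); I interleave so that every critical/interpolation step is immediately followed by a noncritical one.

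I then construct inductively holomorphic maps $f^{(n)}\colon L_n\to\c^2$ satisfying: $(a_n)$ $(f_1^{(n)})^2+(f_2^{(n)})^2=H$ on $L_n$; $(b_n)$ $\|f^{(n)}-f^{(n-1)}\|_{L_{n-1}}<\epsilon_n$ for $n\ge 1$, with $\epsilon_n$ chosen after $f^{(n-1)}$ and $\sum\epsilon_n<\epsilon$; $(c_n)$ conditions (iii)--(iv) of the proposition restricted to $L_n$; $(d_n)$ $f^{(n)}-f$ has a zero of multiplicity $k(p)$ at each $p\in\Lambda\cap L_n$; and $(e_n)$ for any arc $\alpha\subset L_n$ from $p_0$ to $bL_n$, $\int_\alpha(|f_1^{(n)}|^2+|f_2^{(n)}|^2+|H|)\|\theta\|>n$. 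The base case $f^{(0)}$ is produced by Lemma \ref{lem:fixharmonic} applied to a very simple admissible set built from $K$ and the initial $\Omega_p$'s, joined by arcs on which $f$ is extended preserving the nullity via \cite[Lemma 3.3]{AlarconCastro2017}, with the arc values tuned so that the relevant periods match $\pgot$ and the initial integrals match $F$. On noncritical steps, Lemma \ref{lem:completeness} applied with $K=L_n$, $L=L_{n+1}$, $\Lambda=\Lambda\cap L_n$, and $\tau=n+1$ directly yields $f^{(n+1)}$ verifying $(a_{n+1})$--$(e_{n+1})$. On critical/interpolation steps I form the very simple admissible set $S_n=L_n\cup\Gamma_n\cup\bigcup\{\Omega_p: p\in(\Lambda\cap L_{n+1})\setminus(\Lambda\cap L_n)\}$, where $\Gamma_n$ is a family of arcs carrying the new topology; I extend $f^{(n)}$ and $f$ to $\Gamma_n$ preserving nullity and prescribing the new period and integral values, and then apply Lemma \ref{lem:fixharmonic} to obtain $f^{(n+1)}$ with $(a_{n+1})$--$(d_{n+1})$, while $(e_{n+1})$ is postponed to the subsequent noncritical step.

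Choosing each $\epsilon_n$ sufficiently small yields uniform convergence on compacta of $\{f^{(n)}\}$; by Hurwitz the limit $\tilde f:=\lim f^{(n)}\colon M\to\c^2$ is holomorphic and satisfies (i)--(v) by passage to the limit. For (vi), the identity $\tilde f_1^2+\tilde f_2^2=H$ forces $|\tilde f_1|^2+|\tilde f_2|^2+|H|>0$ wherever $H\ne 0$, and any remaining common zeros of $\tilde f_1,\tilde f_2$ lie in the discrete set $\{H=0\}$; the explicit spray form of the extension in Lemma \ref{lem:fixharmonic} (where $\wt\eta$ retains the zero divisor of $\eta=f_1-\imath f_2$ by its property (H)) prevents the creation of new common zeros outside $K\cup\Omega$. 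For the length estimate, a divergent arc $\alpha\subset M$ exits each $L_n$, so $(e_n)$ applied along subarcs from $p_0$ to $bL_n$, together with the smallness of the tail $\sum_{k>n}\epsilon_k$, gives $\int_\alpha(|\tilde f_1|^2+|\tilde f_2|^2+|H|)\|\theta\|=\infty$. The main obstacle is simultaneously controlling, at each step, the period-dominating spray (for periods and interpolation at $\Lambda$), the jet condition of order $k(p)$, \emph{and} the length lower bound $(e_n)$; this is absorbed by Lemmas \ref{lem:fixharmonic} and \ref{lem:completeness}, but requires $\epsilon_n$ at step $n$ to be chosen after $f^{(n-1)}$ and small enough that periods, Taylor coefficients at $\Lambda$, the no-new-common-zeros property, and length estimates are all stably preserved under the final limit.
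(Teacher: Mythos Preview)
Your recursive exhaustion scheme built on Lemmas \ref{lem:fixharmonic} and \ref{lem:completeness} is essentially the paper's approach. The paper organizes the induction slightly differently: rather than alternating between ``critical'' steps (Lemma \ref{lem:fixharmonic} only) and ``noncritical'' steps (Lemma \ref{lem:completeness} only), at every step $j$ it applies both lemmas in succession---first Lemma \ref{lem:fixharmonic} to pass from a very simple admissible set $S\subset K_j$ to a compact neighbourhood $L$ of $S$, then Lemma \ref{lem:completeness} to pass from $L$ to $K_j$ and pick up the length estimate. Both organizations work.

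There is, however, a genuine gap in your treatment of condition (vi), namely the claim that $\wt f$ acquires no new common zeros at points of $\{H=0\}\setminus(K\cup\Omega)$. You invoke property (H) from the \emph{proof} of Lemma \ref{lem:fixharmonic}, but that is an internal property concerning the zero divisor of $\wt\eta$ relative to $\eta$ on the approximation set $S$; it does not by itself rule out common zeros of $(\wt f_1,\wt f_2)$ at zeros of $H$ lying in $L\setminus S$. Moreover, on your noncritical steps you call only Lemma \ref{lem:completeness}, whose \emph{statement} says nothing about common zeros of the output, so even granting (H) you have not covered those steps. The paper closes this gap directly: before starting the recursion it fixes small pairwise disjoint disks $A_p$ around every zero of $H$ in $M\setminus(K\cup\Omega)$, extends $f$ holomorphically and \emph{nonvanishingly} to $A=\bigcup A_p$ (which is elementary since $f_1^2+f_2^2=H$ has nonvanishing local solutions on simply connected sets), and then carries $A$ along in the exhaustion, approximating at every stage on $K_{j-1}\cup A$. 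Hurwitz's theorem then forces the limit $\wt f$ to be nonvanishing on $A$, which is exactly what you need. Your scheme is easily repaired by incorporating these auxiliary disks in the same way.
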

\begin{proof}
	Up to slightly enlarging $K$ and shrinking the subsets $\Omega_p$ if necessary, we may assume without lost of generality that $bK\cap\Lambda=\emptyset$ and that $\Omega_p\subset \mathring{K}$ or $\Omega_p\cap K=\emptyset$ for any $p\in\Lambda$.

Set $K_0:=K$ and let $\{K_j\}_{j\in\n}$ be an exhaustion of smoothly bounded compact Runge domains on $M$ such that
\[
K_0\Subset K_1\Subset\cdots\Subset \bigcup_{j\in\n}K_j=M \quad\text{ and }\quad \chi(K_j\setminus \mathring K_{j-1})\in\{-1,0\},
\]
where $\chi(\cdot)$ denotes the Euler characteristic. 
We assume that $bK_j\cap \Lambda=\emptyset$ and that for any $p\in\Lambda$, we have that $\Omega_p\subset \mathring{K_j}$ or $\Omega_p\cap K_j=\emptyset$ for $j\in\n$. 
We also assume that $H$ never vanishes at $bK_j$ for any $j\in\n$.
The existence of such a sequence is guaranteed by basic topological arguments.

Set $V=\{p\in M\setminus (K\cup \Omega): H(p)=0 \}$. For any $p\in V$  we consider a simply connected neighbourhood $A_p$ of $p$ in $M$ such that $A:=\cup_{p\in V} A_p$ is a union of pairwise disjoint disks. Recall that $H$ is a nonzero holomorphic function. 
Up to shrinking the $A_p$'s if necessary we may assume that $A\cap bK_j=\emptyset$ for any $j\in\n$.
We extend $f=(f_1,f_2)$ to $A$ such that $f_1^2+f_2^2=H$ and $f$ does not vanish at $A$.

Set $(f_{0,1},f_{0,2}):=(f_1,f_2)\colon K_0\cup \Omega \cup A\to\c^2$. Given a sequence of positive numbers $\{\epsilon_j\}_{j\in\n}$ which will be specified later, we recursively construct a sequence of holomorphic maps $\{ (f_{j,1},f_{j,2})\colon K_j\cup\Omega\cup A\to\c^2 \}_{j\in\n}$ with the following properties for any $j\in\n$:
	\begin{enumerate}[\rm (i$_j$)]
		\item $f_{j,1}^2+f_{j,2}^2=H$ on $K_j\cup A$.
		\medskip
		\item $\| (f_{j,1},f_{j,2})(p)-(f_{j-1,1},f_{j-1,2})(p) \|<\epsilon_j$ for any $p\in K_{j-1}\cup A$.
		\smallskip
		\item $\displaystyle\int_{\gamma} (f_{j,1},f_{j,2})\,\theta=\imath \pgot(\gamma)$ for any closed curve $\gamma\subset K_j$.
		\item $\displaystyle\Re\int_{p_0}^p  (f_{j,1},f_{j,2})\,\theta =F(p)$ for any $p\in \Lambda\cap K_j$. By {\rm (iii$_j$)} this is independent of the chosen path connecting $p_0$ with $p$.
		\item $(f_{j,1},f_{j,2})-(f_{j-1,1},f_{j-1,2})$ has a zero of multiplicity $k(p)$ at any $p\in \Lambda$.
		\medskip
		\item If $\alpha\subset K_j$ is an arc connecting $p_0$ with $bK_j$ then
		\[
			\int_\alpha \big(|f_{j,1}|^2+|f_{j,2}|^2+|H|\big)\|\theta\|>j.
		\] 
	\end{enumerate}
	
	Assume that we have such a sequence. We choose the positive numbers $\{\epsilon_j \}_{j\in\n}$ decreasing to zero fast enough such that properties {\rm (ii$_j$)} for $j\in\n$ ensure that there exists a limit holomorphic map
	\[
		\wt f\colon M\to\c^2,\qquad \wt f=(\wt f_1,\wt f_2):=\lim\limits_{j\to+\infty}(f_{j,1},f_{j,2})
	\]
	and that $\|(\wt f_1,\wt f_2)(p)-(f_1,f_2)(p)\|\leq \sum_{j=1}^\infty \epsilon_j<\epsilon$ for any $p\in K$.
	Thus, condition {\rm (ii)} holds.
	Clearly, properties {\rm (i$_j$)} imply {\rm (i)}.
	{\rm (iii)} is a consequence of {\rm (iii$_j$)}; indeed, the real part of the equation implies that $\Re (\wt f \theta)$ is an exact $1$-form whereas the imaginary part ensures $\Im\int_{\gamma} \wt f\theta=\pgot(\gamma)$.
	Additionally, {\rm (iv)} is a consequence of {\rm (iv$_j$)}, $j\in\n$, whilst {\rm (v)} follows from {\rm (v$_j$)}, $j\in\n$. 
	Finally, let us check that {\rm (vi)} holds. If the approximations involved in {\rm (ii$_j$)} are good enough then $\wt f=(\wt f_1,\wt f_2)$ never vanishes on $A$ since $f$ does not and by Hurwitz's theorem $(\wt f_1,\wt f_2)$ vanishes at a point in $K\cup \Omega$ only if $(f_1,f_2)$ does. Hence $\big(|\wt f_1|^2+|\wt f_2|^2+|H|\big)\|\theta\|^2$ is a Riemannian metric on $M$ except for the points in $K\cup \Omega$ where $f=(f_1,f_2)$ vanishes, recall that $\theta$ is a holomorphic $1$-form never vanishing on $M$. Further, properties {\rm (vi$_j$)} for any $j\in\n$ ensure that any divergent path has infinite length.
		
	Let us construct such a sequence to finish the proof.
	First, the base of the induction is given by $(f_{0,1},f_{0,2})$.
	Clearly, {\rm (i$_0$)} holds.
	Conditions {\rm (ii$_0$)}, {\rm (v$_0$)}, and {\rm (vi$_0$)} are vacuous.
	Condition {\rm (iii$_0$)} holds since \eqref{eq:pgot} and {\rm (iv$_0$)} follows since $f\theta=\partial F$ on $K=K_0$.
	
	Assume now that we have already constructed the term $(f_{j-1,1},f_{j-1,2})\colon K_{j-1}\cup \Omega\cup A\to\c^2$ for some $j\in\n$ and let us construct $(f_{j,1},f_{j,2})\colon K_j\cup\Omega\cup A\to\c^2$.
	Notice that, since $\Lambda$ is closed and discrete, we have that $\Lambda_j:=\Lambda\cap K_j$ is empty or finite. 
	First of all, it is clear that $(f_{j,1},f_{j,2})|_{(\Omega\cup A)\setminus K_j}:=(f_{j-1,1},f_{j-1,2})$ verifies {\rm (ii$_j$)} at $p\in A\setminus K_j$ and {\rm (v$_j$)} for any $p\in\Lambda\setminus \Lambda_j$.
	Hence, it is enough to construct $(f_{j,1},f_{j,2})$ on $K_j$.	
	
	Let $S\subset K_j$ be a connected very simple admissible Runge subset obtained by attaching finitely many pairwise disjoint Jordan arcs to $K_{j-1}\cup ((\Omega\cup A)\cap K_j)$ so that $S$ is a strong deformation retract of $K_j$ and $\Lambda_j \subset \mathring S$.
	Recall that $A\cap (K_j\setminus \mathring K_{j-1})$ is a union of pairwise disjoint disks.
	Observe that some of the attached arcs describe the topology of $K_j\setminus K_{j-1}$, if nontrivial, whilst the others connect $K_{j-1}$ to any $\Omega_p$ for each $p\in \Lambda\cap (K_j\setminus K_{j-1})$ and to any $A_p$ for any $p\in A\cap (K_j\setminus \mathring K_{j-1})$.
	
	Next, we extend $(f_{j-1,1},f_{j-1,2})$ to $S$ continuously using \cite[Lemma 3.3]{AlarconCastro2017} such that the following properties hold:
	\begin{enumerate}[\rm (a)]
		\item $f_{j-1,1}^2+f_{j-1,2}^2=H$ on $S$, recall that $(f_{j-1,1}^2+f_{j-1,2}^2)(p)=H(p)$ for any $p\in\Omega\cup A$,
		\item if $\gamma\subset S$ is a closed arc, then
		\[
		\int_{\gamma} (f_{j-1,1},f_{j-1,2})\,\theta =\imath \pgot(\gamma),
		\]
		\item and if $\gamma_p\subset S$ is an arc connecting $p_0$ with $p\in \Lambda_j$, then
		\[
		\displaystyle \Re\int_{\gamma_p} (f_{j-1,1},f_{j-1,2})\,\theta =F(p).
		\]
	\end{enumerate}

	Take a smoothly bounded compact neighbourhood $L$ of $S$ which is a strong deformation retract of $K_j$. Recall that $S$ is a strong deformation retract of $K_j$.
	Lemma \ref{lem:fixharmonic} provides a holomorphic map $(g_{1},g_{2})\colon L\to\c^2$ such that
		\begin{enumerate}[\rm (A1)]
		\item[\rm (A1)] $g_1^2+g_2^2=H $ on $L$.
		\item[\rm (A2)] $\|(g_1,g_2)(p)-(f_{j-1,1},f_{j-1,2})(p)\|<\epsilon_j/2$ for any $p\in S$.
		\item[\rm (A3)] $\big((g_1,g_2)-(f_{j-1,1},f_{j-1,2})\big)\theta$ is an exact $1$-form on $S$, hence on $L$, see \rm (b). 
		\item[\rm (A4)] $\displaystyle \int_{p_0}^{p} \big((g_1,g_2)-(f_{j-1,1},f_{j-1,2})\big)\theta=0$ for any $p\in\Lambda_j$. 
		\item[\rm (A5)] $(g_1,g_2)-(f_{j-1,1},f_{j-1,2})$ has a zero of multiplicity (at least) $k(p)$ at any point $p\in\Lambda_j$.
	\end{enumerate}
	Therefore, Lemma \ref{lem:completeness} applied to
	\[
	M,\quad \theta,\quad L\subset K_j,\quad \Lambda_j\subset L,\quad k, \quad H, \quad (g_1,g_2), \quad \text{ and }\quad j>0
	\]
	provides a holomorphic map $(f_{j,1},f_{j,2})\colon K_j\to\c^2$ such that 
	\begin{enumerate}[\rm (B1)]
		\item $f_{j,1}^2+f_{j,2}^2=H$ on $K_j$.
		\item $\|(f_{j,1},f_{j,2})(p)-(g_1,g_2)(p)\|<\epsilon_j/2$ for any $p\in L$.
		\item $((f_{j,1},f_{j,2})-(g_1,g_2))\theta$ is an exact $1$-form on $L$, hence on $K_j$. 
		\item $\displaystyle \int_{p_0}^{p} ((f_{j,1},f_{j,2})-(g_1,g_2))\theta=0$ for any $p\in\Lambda_j$. 
		\item $(f_{j,1},f_{j,2})-(g_1,g_2)$ has a zero of multiplicity $k(p)$ at any point $p\in\Lambda_j$.
		\item If $\alpha\subset K_j$ is a curve connecting $p_0$ with $bK_j$, then
		\[
		\displaystyle\int_\alpha \big(| f_{j,1}|^2+| f_{j,2}|^2+|H|\big)\|\theta\|>j.
		\]
	\end{enumerate}
	
	We claim that the pair $(f_{j,1},f_{j,2})$ satisfies the conclusion. Indeed, it is clear that {\rm (i$_j$)} equals {\rm (B1)} and {\rm (vi$_j$)} equals {\rm (B6)}.
	Additionally, {\rm (A2)} and {\rm (B2)} imply {\rm (ii$_j$)}.
	{\rm (v$_j$)} follows from {\rm (A5)} and {\rm (B5)}.
	On the other hand, by {\rm (iii$_{j-1}$)} we have that the real part of $(f_{j-1,1},f_{j-1,2})\theta$ is an exact $1$-form, hence {\rm (A3)} and {\rm (B3)} implies {\rm (iii$_j$)}, recall that $L$ is a strong deformation retract of $K_j$.
	Finally, {\rm (iv$_j$)} follows from {\rm (c)}, {\rm (A4)}, and {\rm (B4)}.
\end{proof}

\begin{remark}\label{rem:nojet}
	Proposition \ref{pro:moving2} holds simpler if one just ensure interpolation but not jet interpolation. That is, if the map $k(p)=0$ for any $p\in\Lambda$, then it is not necessary to consider the subsets $\Omega_p$'s defining the jets, and so that $F\colon \Lambda\to\r^3$ is just a map.
	In such a case, Proposition \ref{pro:moving2} ensures conditions {\rm (i)}--{\rm (iv)} and {\rm (vi)}; whilst {\rm (v)} does not make sense.
\end{remark}

\section{Main results and applications}\label{sec:main}

In this section we  prove Theorem \ref{th:fixsimple}, Theorem \ref{th:GXsimple} and Corollary \ref{cor:gaussmap3}.
Let $\pi\colon\r^n\to\r^{n-2}$ be the projection into the $n-2$ last coordinates, that is, if $(x_1\ldots,x_n)\in\r^n$ then $\pi(x_1\ldots,x_n)=(x_3\ldots,x_n)\in\r^{n-2}$.

As stated, Theorem \ref{th:fixsimple} is trivially a consequence of the following result.
\begin{theorem}\label{th:fixingCMI}
	Let $M$ be an open Riemann surface, $K\subset M$ be a smoothly bounded compact Runge domain, and $\Lambda\subset M$ be a closed discrete subset. 
	Let $\Omega_p\ni p$, $p\in\Lambda$, be pairwise disjoint compact neighbourhoods of the points in $\Lambda$. 
	Set $\Omega=\bigcup_{p\in\Lambda}\Omega_p\subset M$.
	Let $\Hgot\colon M\to\r^{n-2}$ be a nonconstant harmonic map and $X\colon K\cup\Omega\to\r^n$ be a conformal minimal immersion 
	 such that $\pi\circ X=\Hgot$ on $K\cup \Omega$.
	Let $k\colon \Lambda\to\n$ be a map and $\pgot\colon \Hcal_1(M;\z)\to\r^n$ be a group morphism such that $\Flux_X=\pgot$ on $K$ and 
	\begin{equation}\label{eq:fluxcondition}
		\Im\int_{\gamma} \partial\Hgot = \pi \circ \pgot(\gamma), \text{ for any closed curve $\gamma\subset M$.}
	\end{equation}
	
	Then, $X$ may be  approximated uniformly on $K$ by complete conformal minimal immersions $\wt X=(\wt X_1,\ldots,\wt X_n)\colon M\to\r^n$ such that
	\begin{enumerate}[\rm (a)]
		\item $(\wt X_3,\ldots,\wt X_n)=\Hgot$.
		\item $\wt X$ and $X$ have a contact of order $k(p)\in\n$ at any point $p\in\Lambda$.
		\item $\Flux_{\wt X}=\pgot$ on $M$.
	\end{enumerate}
\end{theorem}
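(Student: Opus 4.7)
The plan is to reduce Theorem \ref{th:fixingCMI} to Proposition \ref{pro:moving2} via the Weierstrass representation. Fix a nowhere-vanishing holomorphic $1$-form $\theta$ on $M$ (Gunning--Narasimhan) and write $\partial\Hgot_i = h_i\theta$ for $i=3,\ldots,n$ with $h_i\in\Oscr(M)$, and $\partial X_i = f_i\theta$ on $K\cup\Omega$ for $i=1,2$. The nullity condition $\sum_{i=1}^n(\partial X_i)^2=0$ satisfied by $X$ on $K\cup\Omega$ then yields
\[
f_1^2+f_2^2 \;=\; -\sum_{i=3}^n h_i^2 \;=:\; H \qquad\text{on } K\cup\Omega,
\]
so $H\in\Oscr(M)$ plays the role of the quadric target. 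I will assume the generic case $H\not\equiv 0$; the degenerate case $H\equiv 0$, which can arise for $n\ge 4$, forces $(f_1,f_2)=g(1,\pm\imath)$ for a single holomorphic $g$ and can be treated separately by a direct Oka--Mergelyan argument on $g$. Conversely, any conformal minimal immersion $\wt X\colon M\to\r^n$ with $(\wt X_3,\ldots,\wt X_n)=\Hgot$ is determined, up to real additive constants in its first two coordinates, by a pair $(\wt f_1,\wt f_2)\in\Oscr(M)^2$ solving $\wt f_1^2+\wt f_2^2 = H$ such that $\Re(\wt f_i\theta)$ is exact with the correct imaginary periods.

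To produce such $(\wt f_1,\wt f_2)$, I apply Proposition \ref{pro:moving2} to the data: the $1$-form $\theta$, the compact domain $K$, the closed discrete set $\Lambda$ and the neighbourhoods $\{\Omega_p\}$, the function $H$, the map $(f_1,f_2)\colon K\cup\Omega\to\c^2$, the harmonic map $F$ equal to a suitable scalar multiple of $(X_1,X_2)|_\Omega$ (so that $\partial F = (f_1,f_2)\theta$ on $\Omega$), a base point $p_0\in\mathring K\setminus\Omega$, the jet-order function $k$, and the morphism $\pgot':=(\pgot_1,\pgot_2)\colon\Hcal_1(M;\z)\to\r^2$ consisting of the first two components of $\pgot$. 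The required compatibility condition \eqref{eq:pgot} is precisely the projection of $\Flux_X = \pgot$ on $K$ onto its first two coordinates. Applied with the prescribed tolerance $\epsilon>0$, the proposition outputs a holomorphic map $(\wt f_1,\wt f_2)\colon M\to\c^2$ satisfying its conclusions (i)--(vi).

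Now define $\wt X\colon M\to\r^n$ by $\wt X_i(p):=X_i(p_0)+2\Re\int_{p_0}^p\wt f_i\theta$ for $i=1,2$ (single-valued by the exactness in (iii)) and $\wt X_i:=\Hgot_i$ for $i\ge 3$. Then $\partial\wt X=(\wt f_1\theta,\wt f_2\theta,h_3\theta,\ldots,h_n\theta)$ is holomorphic and satisfies the nullity relation by design. It is nowhere-vanishing on $M$: outside $K\cup\Omega$ by Proposition \ref{pro:moving2}(vi), and at any common zero of $(f_1,f_2)$ inside $K\cup\Omega$ because the original $X$ is already an immersion there, which forces $(h_3,\ldots,h_n)$ to be nonzero at such points. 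Conclusion (a) is tautological; (b) follows from (iv) (giving $\wt X(p)=X(p)$ with the correct choice of initial condition) combined with (v) (matching the jets of order $k(p)$); and (c) splits into the first two coordinates, handled by (iii), and the last $n-2$ coordinates, handled by the hypothesis \eqref{eq:fluxcondition}. Completeness of $\wt X$ follows since its induced metric dominates $(|\wt f_1|^2+|\wt f_2|^2+|H|)\|\theta\|^2$ (using $|H|\le\sum|h_i|^2$), which (vi) ensures is complete.

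The main technical work has been absorbed into Proposition \ref{pro:moving2}, whose proof combines the period-dominating spray construction of Lemma \ref{lem:fixharmonic} with a Jorge--Xavier labyrinth for completeness. The hardest part of the present argument is thus bookkeeping rather than analysis: identifying the correct Weierstrass data on $K\cup\Omega$, projecting $\pgot$ to its first two components so that it matches \eqref{eq:pgot}, and verifying that the remaining $n-2$ coordinates of $\wt X$ automatically inherit the required flux from the hypothesis \eqref{eq:fluxcondition}.
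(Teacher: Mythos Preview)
Your argument is correct and is essentially the paper's own proof: write $\partial X = f\theta$, set $H=-\sum_{i\ge 3}h_i^2$, feed $(f_1,f_2)$, $F=(X_1,X_2)$ and $(\pgot_1,\pgot_2)$ into Proposition \ref{pro:moving2}, and reassemble $\wt X$ from the output via the Weierstrass formula, checking (a)--(c) and completeness exactly as you do. Your hedging about a scalar in $F$ (none is needed once conventions are fixed) and your flag of the degenerate case $H\equiv 0$ for $n\ge 4$ (which the paper dismisses by asserting $H\not\equiv 0$ follows from $\Hgot$ nonconstant) are minor points of extra caution rather than departures from the method.
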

\begin{proof}
	Let $\theta$ be a holomorphic $1$-form never vanishing on $M$. Set $\partial X=f\theta$ where $f=(f_1,\ldots,f_n)\colon K\cup \Omega\to \Agot_*=\Agot\setminus\{0\}$ is a holomorphic map, see \eqref{eq:nullquadric}. Set also $\partial\Hgot=h\theta$ where $h=(h_3,\ldots,h_n)\colon M\to\c^{n-2}$ is a holomorphic map. Notice that $(f_3,\ldots,f_n)=(h_3,\ldots,h_n)$ on $K\cup \Omega$.
	
	Write $X=(X_1,\ldots,X_n)$ and $\pgot=(\pgot_1,\ldots,\pgot_n)$. We consider the holomorphic map $H\colon M\to\c$ defined by
	\begin{equation}\label{eq:H}
	H:=-(h_3^2+\ldots+h_n^2), 
	\end{equation}
	which is nonzero since $\Hgot$ is nonconstant. Fix also a point $p_0\in \mathring  K\setminus \Omega$. Proposition \ref{pro:moving2} applied to the data 
	\[
	K\subset M,\ \Lambda\subset\Omega \subset M,\ \theta, \ p_0\in K, \ 
	 H, \ F=(X_1,X_2), \ f=(f_1,f_2),\ k,  \ \text{ and }\ (\pgot_1,\pgot_2)
	\]
	provides, given $\epsilon >0$, a holomorphic map $\wt f=(\wt f_1,\wt f_2)\colon M\to\c^2$ with the following properties:
	\begin{enumerate}[\rm (i)]
		\item $\wt f_1^2+\wt f_2^2=H $ on $M$.
		\item $\|(\wt f_1,\wt f_2)(p)-(f_1,f_2)(p)\|<\epsilon$ for any $p\in K$. 
		\item $\Re (\wt f\theta)$ is an exact $1$-form and $\displaystyle\Im\int_{\gamma}  (\wt f_1,\wt f_2)\,\theta=(\pgot_1,\pgot_2)(\gamma)$ for any closed curve $\gamma$ on $M$.
		\item $\displaystyle\Re \int_{p_0}^{p} (\wt f_1,\wt f_2)\,\theta=(X_1,X_2)(p)$ for any $p\in\Lambda$.
		\item $(\wt f_1,\wt f_2)-( f_1, f_2)$ has a zero of multiplicity $k(p)$ at any point $p\in\Lambda$.
		\item $\big(|\wt f_1|^2+|\wt f_2|^2+|H|\big)\|\theta\|^2$ is a complete Riemannian metric on $M$.
	\end{enumerate}
	Notice that Proposition \ref{pro:moving2} ensures that the metric involved in {\rm (vi)} has no singular points since $X$ is a conformal minimal immersion on $K$ and so that, $f_1$, $f_2$, and $H$ have no common zeros.

	We claim that the map $\wt X=(\wt X_1,\ldots,\wt X_n)\colon M\to\r^n$ defined by 
	\[
		(\wt X_1,\wt X_2)=X(p_0)+\Re\int_{p_0}^p  (\wt f_1,\wt f_2)\,\theta ,\quad \text{and} \quad (\wt X_3,\ldots,\wt X_n)=\Hgot
	\]
	is a conformal minimal immersion and solves the Theorem. Indeed, clearly {\rm (a)} is satisfied.
	By first part of property {\rm (iii)}, $\wt X$ is a well defined map.
	By {\rm (i)}, {\rm (vi)}, and \eqref{eq:H}, we have that $(\wt f_1,\ldots,\wt f_n)$ is a holomorphic map assuming values into $\Agot_*$, hence $\wt X$ is a conformal minimal immersion.
	Condition {\rm (ii)} ensures that $X$ is approximated uniformly on $K$ by $\wt X$.
	{\rm (b)} is implied by {\rm (iv)} and {\rm (v)}.
	Second part of {\rm (iii)} implies {\rm (c)}, see \eqref{eq:fluxcondition}.
	Finally,
	{\rm (vi)} ensures that $\wt X$ is complete.
\end{proof}

\begin{proof}[Proof of Corollary \ref{cor:gaussmap3}]
	Let $M$ be an open Riemann surface and $\Lambda\subset M$ be a closed discrete subset.
	Let also $F=(F_1,F_2,F_3)\colon \Lambda\to\r^3$ be a map. By Forstneri\v c \cite[Theorem 2.1]{Forstneric2003Acta},  there exists a harmonic function $h\colon M\to\r$ such that
\begin{itemize} 
	\item $h$ has no critical points, that is, $dh\neq 0$ on $M$, and
	\item $h|_\Lambda=F_3$.
\end{itemize}
Theorem \ref{th:fixsimple} applied to the map $F=(F_1,F_2)$ and $\Hgot=h$ provides a complete conformal minimal immersion $X=(X_1,X_2,X_3)\colon M\to\r^3$ such that $X_3=h$.
Note that Theorem \ref{th:fixsimple} gives us that $X$ is nonflat, for instance add to $\Lambda$ four points with images by $F$ contained in no plane.
Therefore, its Gauss map $M\to\s^2$ misses the north and south poles of the sphere. Equivalently, its generalized Gauss map $G_X\colon M\to \cp^2$, see \eqref{eq:ggm}, fails to intersect the complex plane $\{(z_1,z_2,z_3)\in\cp^2 : z_3=0\}$. 
\end{proof}

Theorem \ref{th:GXsimple} follows from the following result.

\begin{theorem}\label{th:gasusmap}
	Let $M$ be an open Riemann surface and $\Lambda\subset M$ be a closed discrete subset.
	Let $Y\colon \Lambda\to \r^n$, $n\ge 3$, be a map  and
	$\pgot\colon \Hcal_1(M;\z)\to\r^n$ be a group morphism.
	Then, there exists a complete conformal minimal immersion $X\colon M\to\r^n$ such that
	\begin{enumerate}[\rm (I)]
		\item $X$ and $Y$ agrees at the points of $\Lambda$.
		\item $\Flux_X=\pgot$.
		\item The generalized Gauss map $G_X\colon M\to\cp^{n-1}$ of $X$ is nondegenerate and fails to intersect $n$ hyperplanes of $\cp^{n-1}$ in general position.
	\end{enumerate}
\end{theorem}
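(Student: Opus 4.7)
The plan is to deduce Theorem \ref{th:gasusmap} from Theorem \ref{th:fixingCMI} by selecting an adapted auxiliary harmonic map $\Hgot\colon M\to\r^{n-2}$ and then invoking Theorem \ref{th:fixingCMI} to interpolate $Y$ in the remaining two coordinates. It will suffice to arrange the generalized Gauss map to avoid the $n$ concrete hyperplanes
\[
\Pi_1=\{z_1-\imath z_2=0\},\qquad \Pi_2=\{z_1+\imath z_2=0\},\qquad \Pi_j=\{z_j=0\},\quad j=3,\ldots,n,
\]
whose normals $e_1-\imath e_2$, $e_1+\imath e_2$, $e_3,\ldots,e_n$ form a basis of $\c^n$, so they are in general position.

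Write $\di X=f\theta$ with $f=(f_1,\ldots,f_n)\colon M\to\Agot_*$. The generalized Gauss map $G_X=[f_1:\cdots:f_n]$ avoids $\Pi_j$ for $j\ge 3$ exactly when $f_j=\di X_j/\theta$ never vanishes on $M$, i.e., when $X_j$ has no critical points. Setting $\eta:=f_1-\imath f_2$ and $H:=f_1^2+f_2^2=-\sum_{j=3}^n f_j^2$, one checks that $\eta$ can vanish only where $H$ does; hence if $H$ is nowhere zero, then $\eta$ and $f_1+\imath f_2=H/\eta$ are nowhere zero, and $G_X$ misses $\Pi_1$ and $\Pi_2$ automatically. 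The problem thus reduces to producing $\Hgot=(h_3,\ldots,h_n)\colon M\to\r^{n-2}$ satisfying
\begin{enumerate}[\rm (a)]
\item $\Hgot|_\Lambda=(Y_3,\ldots,Y_n)$,
\item every $h_j$ has no critical points on $M$,
\item $\sum_{j=3}^n(\di h_j)^2$ vanishes nowhere on $M$, and
\item $\Im\int_\gamma\di\Hgot=\pi\circ\pgot(\gamma)$ for every closed curve $\gamma\subset M$.
\end{enumerate}

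The central technical step is the construction of such an $\Hgot$. For $n=3$ the conditions (b) and (c) coincide and a direct application of Forstnerič \cite[Theorem 2.1]{Forstneric2003Acta} suffices, exactly as in the proof of Corollary \ref{cor:gaussmap3}. For $n\ge 4$ I would argue inductively: apply the same theorem to build $h_n$ meeting (a), (b), and (d) for that coordinate, so that $(\di h_n)^2$ is a nowhere vanishing holomorphic quadratic differential; then for $j=n-1,n-2,\ldots,3$, construct $h_j$ satisfying the analogous (a), (b), (d) together with the extra constraint that $\sum_{k=j}^n(\di h_k)^2$ remains nowhere zero on $M$. This last condition is obtained by a Runge approximation argument on a normal exhaustion of $M$ coupled with a period-dominating spray of the type used in Lemma \ref{lem:fixharmonic}, which allows one to perturb a candidate $h_j$ along directions that kill would-be zeros of $\sum_{k=j}^n(\di h_k)^2$ without disturbing interpolation, the no-critical-points condition, or the flux. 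Morally this is the construction from \cite{AlarconFernandezLopez2013CVPDE} in the case $\Lambda=\emptyset$ fused with the interpolation machinery of \cite{AlarconCastro2017}; this is where I expect the main obstacle to lie.

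Once $\Hgot$ is in hand, pick a smoothly bounded compact Runge domain $K\subset M$ and pairwise disjoint compact disks $\Omega_p\ni p$, $p\in\Lambda$, with $K\cap\Omega=\emptyset$ for $\Omega=\bigcup_p\Omega_p$. On each simply connected component of $K\cup\Omega$, define $(X_1,X_2)$ by solving $f_1^2+f_2^2=-\sum_{j=3}^n(\di h_j/\theta)^2$ (possible by (c)) and integrating, with initial condition $(X_1,X_2)(p)=(Y_1(p),Y_2(p))$ on each $\Omega_p$ and with fluxes $(\pgot_1,\pgot_2)$ matched on $K$; this yields a conformal minimal immersion $X\colon K\cup\Omega\to\r^n$ with $\pi\circ X=\Hgot$ and $X|_\Lambda=Y$. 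Applying Theorem \ref{th:fixingCMI} with these data and $k\equiv 1$ produces a complete conformal minimal immersion $\wt X\colon M\to\r^n$ with $(\wt X_3,\ldots,\wt X_n)=\Hgot$, $\wt X|_\Lambda=Y$, and $\Flux_{\wt X}=\pgot$. By (b) and (c), $G_{\wt X}$ misses $\Pi_1,\ldots,\Pi_n$. Nondegeneracy of $G_{\wt X}$ is then enforced by exploiting the infinite-dimensional freedom in the construction: degeneracy is a nowhere dense condition, so a small generic perturbation of the initial data on $K$ (or, alternatively, enlarging $\Lambda$ by finitely many auxiliary points with prescribed values in sufficiently general position) secures nondegeneracy while preserving (I) and (II).
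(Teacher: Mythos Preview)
Your strategy is genuinely different from the paper's and runs into a real difficulty at the step you yourself flag.

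The paper does \emph{not} deduce Theorem~\ref{th:gasusmap} from Theorem~\ref{th:fixingCMI}. Instead it bypasses the construction of a special $\Hgot$ altogether by applying Proposition~\ref{pro:moving2} directly to \emph{pairs} of coordinates with a \emph{constant} $H$. Concretely, for $n$ even one chooses nonzero $\zeta_1,\ldots,\zeta_{n/2}\in\c$ with $\sum\zeta_j^2=0$ and, for each $j$, invokes Proposition~\ref{pro:moving2} with $H=\zeta_j^2$ to produce $(\wt f_{j,1},\wt f_{j,2})$ satisfying $\wt f_{j,1}^2+\wt f_{j,2}^2=\zeta_j^2\neq 0$; this identity forces $\wt f_{j,1}\pm\imath\wt f_{j,2}$ to be nowhere zero, so the Gauss map misses the $n$ hyperplanes $\{z_{2j-1}\pm\imath z_{2j}=0\}$. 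For $n$ odd the extra coordinate is handled by Lemma~\ref{lem:specialtheta}, which furnishes a nowhere-vanishing $1$-form $\theta$ with prescribed periods and integrals along arcs to $\Lambda$; one then sets the last Weierstrass datum equal to $1$ and proceeds as in the even case on the remaining $n-1$ coordinates. Completeness follows from a single application of Proposition~\ref{pro:moving2}\,(vi) via the elementary bound $|\wt f_{1,1}|^2+|\wt f_{1,2}|^2\ge|\zeta_1|^2$.

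Your route instead demands an $\Hgot=(h_3,\ldots,h_n)$ with each $h_j$ noncritical, $\sum_{j\ge 3}(\di h_j)^2$ nowhere zero, prescribed flux, and interpolation at $\Lambda$. For $n=3$ this is exactly what Lemma~\ref{lem:specialtheta} provides (and coincides with the paper's odd-case trick), but for $n\ge 4$ your inductive sketch is where the gap lies. The sprays of Lemma~\ref{lem:fixharmonic} are tailored to the constraint $f_1^2+f_2^2=H$, not to the avoidance condition $(\di h_j)^2+G\neq 0$ together with $\di h_j\neq 0$; building a spray that simultaneously controls periods, interpolation, noncriticality of $h_j$, and nonvanishing of the running sum $\sum_{k\ge j}(\di h_k)^2$ on a noncompact $M$ is substantially more than what the paper's toolkit delivers, and you have not supplied it. The paper's pairing device sidesteps this entirely: by making $H$ a nonzero constant, the two hyperplanes per pair are avoided automatically, with no need for any component of $\partial X$ to be individually nonvanishing beyond the first pair.
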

\begin{proof}
	We adapt the argument in \cite{AlarconFernandezLopez2013CVPDE} to prove the theorem.
	We distinguish cases depending on $n\in\n$.
	
	\noindent {\sl Assume $n\in\n$ is even:}\\
	Let $\theta$ be a holomorphic $1$-form never vanishing on $M$. Consider $K\subset M\setminus \Lambda$ a compact disk. Write $\pgot=((\pgot_{j,1},\pgot_{j,2})_{j=1,\ldots, n/2})$ and $Y=((Y_{j,1},Y_{j,2})_{j=1,\ldots, n/2})$.
	Fix $p_0\in K$ and for any $j=1,\ldots,n/2$ take a complex number $\zeta_j\in\c\setminus\{0\}$ and a holomorphic map $f_j=(f_{j,1},f_{j,2})\colon K\to\c^2$ such that
	\begin{enumerate}[\rm (a1)]
		\item[\rm (a1)] $\displaystyle\sum_{j=1}^{n/2}\zeta_j^2=0$.
		\item[\rm (a2)] 
		$f_{j,1}^2+f_{j,2}^2=\zeta_j^2$ on $K$ for any $j=1,\ldots,n/2$.
		\medskip
		\item[\rm (a3)] The map $\big((f_j)_{j=1,\ldots,n/2} \big)$ is nondegenerate.
	\end{enumerate}
	Take into account Lemma \ref{lem:linearindependent} for {\rm (a3)}.
	
	 Next, Proposition \ref{pro:moving2} applied with only interpolation, see Remark \ref{rem:nojet}, for any $j=1,\ldots,n/2$,  to the data 
	\[
	K\subset M,\quad \Lambda,\quad \theta,\quad  H=\zeta_j \quad F=(Y_{j,1},Y_{j,2}),
	\]
	\[
	\quad f_j:=(f_{j,1},f_{j,2}), \quad p_0,\quad \text{and}\quad  \pgot=(\pgot_{j,1},\pgot_{j,2})
	\]
	ensures that $f_j:=(f_{j,1},f_{j,2})$ may be  approximated uniformly on $K$ by  holomorphic maps $\wt f_j=(\wt f_{j,1},\wt f_{j,2})\colon M\to\c^2$ for any $j=1,\ldots,n/2$ such that, defining $\wt f:=((\wt f_{j,1},\wt f_{j,2})_{j=1,\ldots,n/2})\colon M\to\c^n$, we have that
	\begin{enumerate}[\rm (b1)]
		\item $\wt f_{j,1}^2+\wt f_{j,2}^2=\zeta_j^2\neq 0$ on $M$, and so, $\sum_{j=1}^{n/2} \big(\wt f_{j,1}^2+\wt f_{j,2}^2\big)=0$, see {\rm (a1)}.
		\medskip
		\item $\Re(\wt f\theta)$ is an exact $1$-form on $M$. 
		\medskip
		\item $\displaystyle\int_{\gamma} \wt f\theta =\imath\pgot(\gamma)$ for each closed curve $\gamma$ in $M$.
		\item $\displaystyle \Re\int_{p_0}^{p} \wt f\,\theta=Y(p)$ for each $p\in\Lambda$.
		\item $\|\wt f\|^2 \|\theta\|^2 $ is a complete Riemannian metric on $M$.
	\end{enumerate}
	Property {\rm (b5)} follows easily from {\rm (vi)} in Proposition \ref{pro:moving2}. Note that from {\rm (b1)} we have that
	\[||\wt f(p)|| \ge |\wt f_{1,1}(p)|^2+|\wt f_{1,2}(p)|^2 \ge \frac{|\wt f_{1,1}(p)|^2+|\wt f_{1,2}(p)|^2+|\zeta_1|^2}{2}.
	\]
	Therefore, the conformal minimal immersion $X\colon M\to\r^n$ defined by
	\begin{equation}\label{eq:solX}
	X(p):=\Re\int_{p_0}^p  \wt f\,\theta,\quad p\in M
	\end{equation}
	solves the theorem. Indeed, 
	$X$ is a well defined map by {\rm (b2)} and is a conformal minimal immersion by {\rm (b1)}.
	$X$ verifies {\rm (I)} by condition {\rm (b4)} and \eqref{eq:solX}.
	By property {\rm (b5)} we have that $X$ is complete.
	Clearly, {\rm (II)} is implied by {\rm (b3)}, see \eqref{eq:solX}.
	Additionally, if the approximation involved is close enough and taking into account {\rm (a3)}, then 
	$G_X$ is nondegenerate.
	Finally, 
	{\rm (b1)} implies that the generalized Gauss map $G_X$ (see \eqref{eq:ggm}) fails to intersect the hyperplanes given by 
	\begin{equation}\label{eq:planes}
	\bigg\lbrace [(z_1,\ldots,z_n)]\in\cp^{n-1} : z_{2j-1}+(-1)^\delta \imath z_{2j}=0 \bigg\rbrace,
	\end{equation}
	for any  $j=1,\ldots,n/2$ and each $\delta=0,1$,  which are located in general position.
	
	{\noindent \sl Assume now that $n$ is odd.}
	
	As in the previous case, take $K\subset M\setminus \Lambda$ a compact disk and fix $p_0\in K$. Write also $\pgot=(\pgot_1,\ldots,\pgot_n)$ and $Y=(Y_1,\ldots,Y_n)$. By Lemma \ref{lem:specialtheta} there exists a holomorphic $1$-form  $\theta$ never vanishing on $M$ such that
	\begin{equation}\label{eq:gn}
	\displaystyle\int_{\gamma}\theta =\imath \pgot_n(\gamma), \text{ for any loop $\gamma\subset M$},
	\end{equation}
	and also
	\begin{equation}\label{eq:gn2}
	\displaystyle\Re\int_{p_0}^p \theta =Y_n(p), \text{ for any point $p\in\Lambda$}.
	\end{equation}
	Notice that, since $\theta$ is an exact real $1$-form by \eqref{eq:gn}, the values in \eqref{eq:gn2} are independent of the choice of the curve connecting $p_0$ with $p\in\Lambda$.
	
	Take for any $j=1,\ldots,(n-1)/2$ a complex number $\zeta_j\in\c\setminus\{0\}$ and a holomorphic map $f_j=(f_{j,1},f_{j,2})\colon K\to\c^2$ such that
	\begin{itemize}
		\item $\displaystyle\sum_{j=1}^{n/2}\zeta_j^2=-1$.
		\item 
		$f_{j,1}^2+f_{j,2}^2=\zeta_j^2$ on $K$ for any $j=1,\ldots,n/2$.
		\smallskip
		\item The map $\big((f_j)_{j=1,\ldots,n/2} \big)$ is nondegenerate.
	\end{itemize}
	Proposition \ref{pro:moving2} ensures that $f_j:=(f_{j,1},f_{j,2})\colon K\to\c^2$ may be  approximated uniformly on $K$ by holomorphic maps $(\wt f_{j,1},\wt f_{j,2})\colon M\to\c^2$ for any $j=1,\ldots,(n-1)/2$.
	Set 
	\[\wt f:=(\wt f_{1,1},\wt f_{1,2},\ldots,\wt f_{\frac n2,1},\wt f_{\frac n2,2} ,1)\colon M\to\c^n.\]
	
	Reasoning as in the previous case, the conformal minimal immersion $X\colon M\to\r^n$ defined by
	\begin{equation*}
		X(p):=\Re\int_{p_0}^p \wt f\,\theta,\quad p\in M
	\end{equation*}
	solves the theorem,
	provided that the approximation involved is close enough. Here the generalized Gauss map $G_X$ of $X$ fails to intersect the hyperplanes given in \eqref{eq:planes} for $j=1,\ldots, (n-1)/2$ and each $\delta=0,1$, and also the hyperplane given by
\begin{equation}\label{eq:plane}
\bigg\lbrace [(z_1,\ldots,z_n)]\in\cp^{n-1} : z_n=0\bigg\rbrace,
\end{equation}
which are located in general position. 
\end{proof}

\subsection*{Acknowledgements}
This research is a result of the activity developed within the framework of the Programme in Support of Excellence Groups of the Regi\'on de Murcia, Spain, by Fundaci\'on S\'eneca, Science and Technology Agency of the Regi\'on de Murcia.
The author is partially supported by the MINECO/FEDER grant no. MTM2017-89677-P, by MICINN/FEDER project PGC2018-097046-B-I00, Fundaci\'on S\'eneca project 19901/GERM/15, Spain and grant FJC2018-035533-I.

The author wish to express his gratitude to Antonio Alarc\'on for helpful discussions which led to improvement of the paper and to Isabel Fern\'andez for the question which led to these results.



\bigskip

\noindent Ildefonso Castro-Infantes

\noindent Departamento de Matem\'aticas, Universidad de Murcia, Campus de Espinardo, 30100 Murcia, Spain.

\noindent  e-mail: {\tt ildefonso.castro@um.es}


\begin{thebibliography}{10}
	
	\bibitem{Ahlfors1941ASSF}
	L.~V. Ahlfors.
	\newblock The theory of meromorphic curves.
	\newblock {\em Acta Soc. Sci. Fennicae. Nova Ser. A.}, 3(4):31, 1941.
	
	\bibitem{AlarconCastro2016}
	A.~Alarc\'on and I.~Castro-Infantes.
	\newblock Complete minimal surfaces densely lying in arbitrary domains of
	$\mathbb{R}^n$.
	\newblock {\em Geom. Topol.}, 22(1):571--590, 2018.
	
	\bibitem{AlarconCastro2017}
	A.~Alarc{\'o}n and I.~Castro-Infantes.
	\newblock Interpolation by conformal minimal surfaces and directed holomorphic
	curves.
	\newblock {\em Analysis \& PDE}, 12(2):561--604, 2019.
	
	\bibitem{AlarconCastroLopez2017}
	A.~Alarc\'{o}n, I.~Castro-Infantes, and F.~J. L\'{o}pez.
	\newblock Interpolation and optimal hitting for complete minimal surfaces with
	finite total curvature.
	\newblock {\em Calc. Var. Partial Differential Equations}, 58(1):Art. 21, 20,
	2019.
	
	\bibitem{AlarconDrinovecForstnericLopez2015PLMS}
	A.~Alarc{\'o}n, B.~Drinovec~Drnov{\v{s}}ek, F.~Forstneri{\v{c}}, and F.~J.
	L{\'o}pez.
	\newblock Every bordered {R}iemann surface is a complete conformal minimal
	surface bounded by {J}ordan curves.
	\newblock {\em Proc. London Math. Soc.}, 111(4):851--886, 2015.
	
	\bibitem{AlarconDrinovecForstnericLopez2015Pre}
	A.~Alarc\'{o}n, B.~Drinovec~Drnov\v{s}ek, F.~Forstneri\v{c}, and F.~J.
	L\'{o}pez.
	\newblock Minimal surfaces in minimally convex domains.
	\newblock {\em Trans. Amer. Math. Soc.}, 371(3):1735--1770, 2019.
	
	\bibitem{AlarconFernandez2011DGA}
	A.~Alarc\'{o}n and I.~Fern\'{a}ndez.
	\newblock Complete minimal surfaces in {$\Bbb R^3$} with a prescribed
	coordinate function.
	\newblock {\em Differential Geom. Appl.}, 29(suppl. 1):S9--S15, 2011.
	
	\bibitem{AlarconFernandezLopez2012CMH}
	A.~Alarc{\'o}n, I.~Fern{\'a}ndez, and F.~J. L{\'o}pez.
	\newblock Complete minimal surfaces and harmonic functions.
	\newblock {\em Comment. Math. Helv.}, 87(4):891--904, 2012.
	
	\bibitem{AlarconFernandezLopez2013CVPDE}
	A.~Alarc{\'o}n, I.~Fern{\'a}ndez, and F.~J. L{\'o}pez.
	\newblock Harmonic mappings and conformal minimal immersions of {R}iemann
	surfaces into {$\mathbb{R}^{\rm N}$}.
	\newblock {\em Calc. Var. Partial Differential Equations}, 47(1-2):227--242,
	2013.
	
	\bibitem{AlarconForstneric2014IM}
	A.~Alarc{\'o}n and F.~Forstneri{\v{c}}.
	\newblock Null curves and directed immersions of open {R}iemann surfaces.
	\newblock {\em Invent. Math.}, 196(3):733--771, 2014.
	
	\bibitem{AlarconForstnericLopez2016MZ}
	A.~Alarc{\'o}n, F.~Forstneri{\v{c}}, and F.~J. L{\'o}pez.
	\newblock Embedded minimal surfaces in $\mathbb{R}^n$.
	\newblock {\em Math. Z.}, 283(1-2):1--24, 2016.
	
	\bibitem{AlarconForstneric2017Survey}
	A.~Alarc\'{o}n and F.~Forstneri\v{c}.
	\newblock New complex analytic methods in the theory of minimal surfaces: a
	survey.
	\newblock {\em J. Aust. Math. Soc.}, 106(3):287--341, 2019.
	
	\bibitem{AlarconForstnericLopez2016Gauss}
	A.~Alarc\'{o}n, F.~Forstneri\v{c}, and F.~J. L\'{o}pez.
	\newblock Every meromorphic function is the {G}auss map of a conformal minimal
	surface.
	\newblock {\em J. Geom. Anal.}, 29(4):3011--3038, 2019.
	
	\bibitem{AlarconLopez2019Mittag}
	A.~Alarc{\'o}n and F.~J. L{\'o}pez.
	\newblock Algebraic approximation and the {M}ittag-{L}effler theorem for
	minimal surfaces.
	\newblock Preprint.
	
	\bibitem{AlarconLopez2012JDG}
	A.~Alarc{\'o}n and F.~J. L{\'o}pez.
	\newblock Minimal surfaces in {$\mathbb{R}^3$} properly projecting into
	{$\mathbb{R}^2$}.
	\newblock {\em J. Differential Geom.}, 90(3):351--381, 2012.
	
	\bibitem{AlarconLopez2013MA}
	A.~Alarc{\'o}n and F.~J. L{\'o}pez.
	\newblock Null curves in $\mathbb{C}^3$ and {C}alabi-{Y}au conjectures.
	\newblock {\em Math. Ann.}, 355(2):429--455, 2013.
	
	\bibitem{Bishop1958PJM}
	E.~Bishop.
	\newblock Subalgebras of functions on a {R}iemann surface.
	\newblock {\em Pacific J. Math.}, 8:29--50, 1958.
	
	\bibitem{CastroBrett2019}
	I.~Castro-Infantes and B.~Chenoweth.
	\newblock Carleman approximation by conformal minimal immersions and directed
	holomorphic curves.
	\newblock {\em J. Math. Anal. Appl.}, 484(2):123756, 2020.
	
	\bibitem{Chern1965-book}
	S.-s. Chern.
	\newblock Minimal surfaces in an {E}uclidean space of {$N$} dimensions.
	\newblock In {\em Differential and {C}ombinatorial {T}opology ({A} {S}ymposium
		in {H}onor of {M}arston {M}orse)}, pages 187--198. Princeton Univ. Press,
	Princeton, N.J., 1965.
	
	\bibitem{ChernOsserman1967JAM}
	S.-s. Chern and R.~Osserman.
	\newblock Complete minimal surfaces in euclidean {$n$}-space.
	\newblock {\em J. Analyse Math.}, 19:15--34, 1967.
	
	\bibitem{Forstneric2017book}
	F.~Forstneri{\v{c}}.
	\newblock {\em Stein manifolds and holomorphic mappings. The homotopy principle
		in complex analysis (2nd edn)}, volume~56 of {\em Ergebnisse der Mathematik
		und ihrer Grenzgebiete. 3. Folge. A Series of Modern Surveys in Mathematics}.
	\newblock Springer, Berlin, 2017.
	
	\bibitem{Forstneric2003Acta}
	F.~Forstneri\v{c}.
	\newblock Noncritical holomorphic functions on {S}tein manifolds.
	\newblock {\em Acta Math.}, 191(2):143--189, 2003.
	
	\bibitem{Fujimoto1988SRKU}
	H.~Fujimoto.
	\newblock Examples of complete minimal surfaces in {$\mathbb{R}^m$} whose
	{G}auss maps omit {$m(m+1)/2$} hyperplanes in general position.
	\newblock {\em Sci. Rep. Kanazawa Univ.}, 33(2):37--43, 1988.
	
	\bibitem{Fujimoto1990JDG}
	H.~Fujimoto.
	\newblock Modified defect relations for the {G}auss map of minimal surfaces.
	{II}.
	\newblock {\em J. Differential Geom.}, 31(2):365--385, 1990.
	
	\bibitem{Grauert1957MA}
	H.~Grauert.
	\newblock Approximationss\"atze f\"ur holomorphe {F}unktionen mit {W}erten in
	komplexen {R}\"amen.
	\newblock {\em Math. Ann.}, 133:139--159, 1957.
	
	\bibitem{Grauert1958MA}
	H.~Grauert.
	\newblock Analytische {F}aserungen \"uber holomorph--vollst\"an\-digen
	{R}\"aumen.
	\newblock {\em Math. Ann.}, 135:263--273, 1958.
	
	\bibitem{GunningNarasimhan1967MA}
	R.~C. Gunning and R.~Narasimhan.
	\newblock Immersion of open {R}iemann surfaces.
	\newblock {\em Math. Ann.}, 174:103--108, 1967.
	
	\bibitem{JorgeXavier1980AM}
	L.~P. Jorge and F.~Xavier.
	\newblock A complete minimal surface in {$\mathbb{R}^{3}$} between two parallel
	planes.
	\newblock {\em Ann. of Math. (2)}, 112(1):203--206, 1980.
	
	\bibitem{KusunokiSainouchi1971JMKU}
	Y.~Kusunoki and Y.~Sainouchi.
	\newblock Holomorphic differentials on open {R}iemann surfaces.
	\newblock {\em J. Math. Kyoto Univ.}, 11:181--194, 1971.
	
	\bibitem{Mergelyan1951DAN}
	S.~N. Mergelyan.
	\newblock On the representation of functions by series of polynomials on closed
	sets.
	\newblock {\em Doklady Akad. Nauk SSSR (N.S.)}, 78:405--408, 1951.
	
	\bibitem{Michor1980SP}
	P.~W. Michor.
	\newblock {\em Manifolds of differentiable mappings}, volume~3 of {\em Shiva
		Mathematics Series}.
	\newblock Shiva Publishing Ltd., Nantwich, 1980.
	
	\bibitem{Ru1991JDG}
	M.~Ru.
	\newblock On the {G}auss map of minimal surfaces immersed in {${\bf R}^n$}.
	\newblock {\em J. Differential Geom.}, 34(2):411--423, 1991.
	
	\bibitem{Runge1885AM}
	C.~Runge.
	\newblock Zur {T}heorie der {A}nalytischen {F}unctionen.
	\newblock {\em Acta Math.}, 6(1):245--248, 1885.
	
\end{thebibliography}
\end{document}